\newtheorem*{maintheorem*}{Main Theorem}
\newtheorem{theorem}{Theorem}[section]
\newtheorem{prop}[theorem]{Proposition}
\newtheorem{conj}[theorem]{Conjecture}
\newtheorem{question}[theorem]{Question}
\newtheorem{lemma}[theorem]{Lemma}
\newtheorem{cor}[theorem]{Corollary}
\theoremstyle{definition}
\newtheorem{remark}[theorem]{Remark}
\newtheorem{example}[theorem]{Example}
\numberwithin{equation}{section}
\newcommand{\ii}{\mathcal{A}}
\newcommand{\nn}{\mathbb{N}}
\newcommand{\pp}{\mathbb{P}}
\newcommand{\qq}{\mathbb{Q}}
\newcommand{\rr}{\mathbb{R}}
\newcommand{\zz}{\mathbb{Z}}
\newcommand{\gp}{\text{gp}}
\newcommand{\supp}{\text{supp}}
\providecommand\ldb{\llbracket}
\providecommand\rdb{\rrbracket}
\newcommand{\rank}{\text{rank}}
\keywords{cyclic algebraic semiring, valuation semiring, atomic monoid, ACCP, bounded factorization monoid, finite factorization monoid, half-factorial monoid, unique factorization monoid}
\subjclass[2020]{Primary: 20M13; Secondary: 16Y60, 11R04, 11R09}
\begin{document}

\title{On the additive structure of algebraic valuations of \\ polynomial semirings}

\author{Jyrko Correa-Morris}
\address{Department of Mathematics\\Miami Dade College\\Miami, FL 33135}
\email{jcorrea7@mdc.edu}

\author{Felix Gotti}
\address{Department of Mathematics\\MIT\\Cambridge, MA 02139}
\email{fgotti@mit.edu}

\date{\today}

\begin{abstract}
	In this paper, we study factorizations in the additive monoids of positive algebraic valuations $\nn_0[\alpha]$ of the semiring of polynomials $\nn_0[X]$ using a methodology introduced by D. D. Anderson, D. F. Anderson, and M. Zafrullah in 1990. A cancellative commutative monoid is atomic if every non-invertible element factors into irreducibles. We begin by determining when $\nn_0[\alpha]$ is atomic, and we give an explicit description of its set of irreducibles. An atomic monoid is a finite factorization monoid (FFM) if every element has only finitely many factorizations (up to order and associates), and it is a bounded factorization monoid (BFM) if for every element there is a bound for the number of irreducibles (counting repetitions) in each of its factorizations. We show that, for the monoid $\nn_0[\alpha]$, the property of being a BFM and the property of being an FFM are equivalent to the ascending chain condition on principal ideals (ACCP). Finally, we give various characterizations for $\nn_0[\alpha]$ to be a unique factorization monoid (UFM), two of them in terms of the minimal polynomial of $\alpha$. The properties of being finitely generated, half-factorial, and length-factorial are also investigated along the way.
\end{abstract}

\maketitle

\bigskip
\section{Introduction}
\label{sec:intro}

The study of the deviation of rings of integers from being UFMs in connection to their divisor class groups earned significant attention in the 1960s with the influence of the number theorists L.~Carlitz~\cite{lC60} and W.~Narkiewicz~\cite{wN64,wN66}. Much of the divisibility theory of rings of integers, including their divisor class groups, carries over to Dedekind domains and, more generally, Krull domains. Motivated by this fact, the phenomenon of non-uniqueness of factorizations in the contexts of Dedekind and Krull domains was later investigated by A. Zaks in~\cite{aZ76} and~\cite{aZ80}, respectively. Since then, techniques to study factorizations in the more general context of cancellative commutative monoids, specially in Krull monoids~\cite{GGSS10} and multiplicative monoids of integral domains~\cite{AAZ90}, have been systematically developed, giving rise to what we know today as factorization theory.
\smallskip

In this paper, we are primarily concerned with factorizations in monoids of the form $(\nn_0[\alpha],+)$, where $\nn_0[\alpha] = \{f(\alpha) : f(X) \in \nn_0[X]\}$ is the homomorphic image of the semiring of polynomials $\nn_0[X]$ that we obtain after evaluating at $\alpha \in \rr \setminus \{0\}$. These monoid valuations are cancellative and commutative. The class of cancellative commutative monoids is the most natural abstraction of the class consisting of multiplicative monoids of integral domains. In addition, this class is important from the factorization-theoretical perspective because it is the most suitable abstract framework to formally define the notion of a factorization, as observed by F. Halter-Koch in~\cite{fHK92}. From now on, every monoid we mention here is tacitly assumed to be cancellative and commutative.
\smallskip

Following P.~M. Cohn~\cite{pC68}, we say that a monoid is atomic if every non-invertible element factors into irreducibles. A monoid satisfies the ascending chain condition on principal ideals (ACCP) if every increasing sequence of ideals eventually stabilizes. It follows immediately that every monoid satisfying the ACCP is atomic. There are integral domains whose multiplicative monoids are atomic but do not satisfy the ACCP; the first example was constructed by A. Grams~\cite{aG74}. An atomic monoid is called a finite factorization monoid (FFM) if every element admits only finitely many factorizations, and it is called a bounded factorization monoid (BFM) if for every element there is a bound for the number of irreducibles (counting repetitions) in each of its factorizations. These notions were introduced by D.~D. Anderson, D.~F. Anderson, and M. Zafrullah~\cite{AAZ90}  in the context of the diagram of Figure~\ref{fig:AAZ's atomic chain for monoids} to carry out the first systematic study of factorizations in integral domains. Following Zaks~\cite{aZ76}, we call an atomic monoid half-factorial (HFM) if any two factorizations of the same element have the same number of irreducibles (counting repetitions). The implications in the diagram shown in Figure~\ref{fig:AAZ's atomic chain for monoids} hold for any monoid. For the sake of consistency, in the same diagram, we let the (nonstandard) acronym ATM stand for the term `atomic monoid'.
\smallskip

\begin{figure}[h]
	\begin{tikzcd}
		\textbf{ UFM } \ \arrow[r, Rightarrow] \arrow[red, r, Leftarrow, "/"{anchor=center,sloped}, shift left=1.7ex] \arrow[d, Rightarrow, shift right=1ex] \arrow[red, d, Leftarrow, "/"{anchor=center,sloped}, shift left=1ex]& \ \textbf{ HFM } \arrow[d, Rightarrow, shift right=0.6ex] \arrow[red, d, Leftarrow, "/"{anchor=center,sloped}, shift left=1.3ex] \\
		\textbf{ FFM } \ \arrow[r, Rightarrow]	\arrow[red, r, Leftarrow, "/"{anchor=center,sloped}, shift left=1.7ex] & \ \textbf{ BFM } \arrow[r, Rightarrow]  \arrow[red, r, Leftarrow, "/"{anchor=center,sloped}, shift left=1.7ex] & \textbf{ ACCP monoid}  \arrow[r, Rightarrow] \arrow[red, r, Leftarrow, "/"{anchor=center,sloped}, shift left=1.7ex]  & \textbf{ATM}
	\end{tikzcd}
	\caption{The implications in the diagram show the known inclusions among the subclasses of atomic monoids we have previously mentioned. The diagram also emphasizes (with red marked arrows) that none of the shown implications is reversible.}
	\label{fig:AAZ's atomic chain for monoids}
\end{figure}
\smallskip

The study of factorization theory of monoids stemming from the semiring $\nn_0[X]$ and its valuations has been the subject of several recent papers. Methods to factorize polynomials in $\nn_0[X]$ were studied in~\cite{hB13}. In addition, a more systematic investigation of factorizations in the multiplicative monoid of the semiring $\nn_0[X]$ was recently carried out by F. Campanini and A. Facchini~in \cite{CF19}, where the similarity between the structure of $\nn_0[X]$ and that of Krull monoids was highlighted. On the other hand, it was proved by P. Cesarz et al.~\cite{CCMS09} that for reasonable quadratic algebraic integers~$\alpha$, the multiplicative monoid of the valuation semiring $\nn_0[\alpha]$ has full infinite elasticity (the elasticity is a factorization invariant introduced in~\cite{rV90}, and it has been extensively studied since then). The additive monoids of rational valuations $\nn_0[q]$ of the semiring $\nn_0[X]$ have also been investigated by Chapman et al. in~\cite{CGG20}, where it was proved, among other results, that the lengths of all factorizations of each element of $\nn_0[q]$ form an arithmetic progression. A similar result for more general additive submonoids of $\qq$ was recently established in~\cite{hP20}.
\smallskip

In this paper, we study atomicity and factorizations of the additive monoids $\nn_0[\alpha]$, where $\alpha$ is a positive real number. Such additive monoids are called here monoid valuations of $\nn_0[X]$. The class of monoid valuations of $\nn_0[X]$ includes, as special cases, $\nn_0[X]$ (the valuation at any positive transcendental number) and all monoid valuations $\nn_0[q]$ with $q$ being a positive rational, called here rational monoid valuations. Rational monoid valuations are Puiseux monoids, and the atomicity of the latter has been recently studied in connection to monoid rings~\cite{CG19} and upper triangular matrices over information semialgebras~\cite{BG20} (see also \cite{GGT19} and references therein). The additive monoids of transcendental valuations of $\nn_0[X]$ are free and, therefore, trivial from the factorization-theoretical perspective. Therefore we focus on monoids $\nn_0[\alpha]$, where $\alpha$ is a positive algebraic number. We call these monoids algebraic monoid valuations. Although algebraic monoid valuations of $\nn_0[X]$ are natural generalizations of rational monoid valuations (studied in~\cite{CGG20}), most of the arithmetic and factorization properties of rational monoid valuations do not hold or trivially generalize to algebraic monoid valuations. This is because, as we shall see later, most of such properties for an algebraic monoid valuation $\nn_0[\alpha]$ depend on the minimal polynomial of $\alpha$.
\smallskip

With our study we accomplish two goals. First, we refine the diagram in Figure~\ref{fig:AAZ's atomic chain for monoids} for the class of monoid algebraic valuations $\nn_0[\alpha]$. We show that for monoids in this class, being a UFM and being an HFM are equivalent conditions, and any of these conditions holds precisely when the degree of the minimal polynomial of $\alpha$ coincides with the number of atoms of $\nn_0[\alpha]$ (Theorem~\ref{thm:UFM characterization}). Also, we prove that being an FFM, being a BFM, and satisfying the ACCP are equivalent conditions for any monoid $\nn_0[\alpha]$ (Theorem~\ref{thm:BFM/FFM equivalence}). In addition, we provide examples of monoids to verify that no other implication in the diagram shown in Figure~\ref{fig:AAZ's atomic chain for monoids} becomes an equivalence in the class of monoid valuations of $\nn_0[X]$. Our second goal here is to investigate the properties of being finitely generated and being length-factorial in the class of algebraic monoid valuations, and then to study how these two properties fit in the diagram shown in Figure~\ref{fig:AAZ's atomic chain for monoids}. This second goal is achieved as indicated in Figure~\ref{fig:atomic classes of CASs}. Following Chapman et al.~\cite{CCGS21}, we say that an atomic monoid is a length-factorial monoid (LFM) if different factorizations of the same element have different numbers of irreducibles (counting repetitions). Length-factoriality was first studied by J. Coykendall and W.~W. Smith~\cite{CS11} in the context of integral domains under the term `other-half-factoriality'. For any positive algebraic number $\alpha$, we prove that $\nn_0[\alpha]$ is an LFM but not UFM if and only if the degree of the minimal polynomial of $\alpha$ precedes in $\zz$ the number of atoms of $\nn_0[\alpha]$ (Theorem~\ref{thm:OHFM characterization}). In the diagrams illustrated in Figure~\ref{fig:atomic classes of CASs}, we let the acronym FGM stand for the term `finitely generated monoid'.
\smallskip

\begin{figure}[h]
	\begin{tikzcd}[cramped]
		\big[ \textbf{UFM} \arrow[r, Leftrightarrow] & \textbf{HFM} \big] \arrow[r, Rightarrow] \arrow[red, r, Leftarrow, "/"{anchor=center,sloped}, shift left=1.5ex] & \textbf{LFM} \arrow[blue, d, Rightarrow, shift right=0.8ex] \arrow[red, d, Leftarrow, "/"{anchor=center,sloped}, shift left=1.6ex] \\
			&	& \textbf{FGM} \arrow[r, Rightarrow] \arrow[red, r, Leftarrow, "/"{anchor=center,sloped}, shift left=1.5ex] & \big[ \textbf{FFM} \arrow[r, Leftrightarrow] & \textbf{BFM} \arrow[r, Leftrightarrow] & \textbf{ACCP monoid} \big] \arrow[r, Rightarrow] \arrow[red, r, Leftarrow, "/"{anchor=center,sloped}, shift left=1.5ex] & \textbf{ATM}
	\end{tikzcd}
	\caption{The diagram shows, as its non-obvious implications, the main results we establish in this paper, where the vertical implication (in blue) holds for all algebraic monoid valuations of $\nn_0[X]$ while the rest of the implications hold for all monoid valuations of $\nn_0[X]$. The diagram also emphasizes (with red marked arrows) the implications that are not reversible: we construct here classes of algebraic monoid valuations of $\nn_0[X]$ witnessing the failure of such reverse implications.}
	\label{fig:atomic classes of CASs}
\end{figure}
\smallskip

\bigskip
\section{Notation and Background}
\label{sec:background}

We let $\mathbb{N}$ and $\nn_0 := \nn \cup \{0\}$ denote the set of positive and nonnegative integers, respectively, and we let $\pp$ denote the set of primes. In addition, for $X \subseteq \rr$ and $\beta \in \rr$, we set $X_{\ge \beta} := \{x \in X : x \ge \beta\}$; in a similar way, we use the notations $X_{\le \beta}$, $X_{> \beta}$, and $X_{< \beta}$. For $q \in \qq_{> 0}$, we denote the unique $n,d \in \nn$ such that $q = \frac nd$ and $\gcd(n,d) = 1$ by $\mathsf{n}(q)$ and $\mathsf{d}(q)$, respectively.

\smallskip
\subsection{Atomic Monoids}

We tacitly assume that all monoids in this paper are cancellative, commutative, and unless we specify otherwise, additively written. In addition, every monoid we treat here is either a group or a \emph{reduced} monoid, that is, its only invertible element is the identity element. Let $M$ be a reduced monoid. For $S \subseteq M$, we let $\langle S \rangle$ denote the submonoid of $M$ generated by $S$. If there exists a finite subset $S$ of $M$ such that $M = \langle S \rangle$, then $M$ is said to be a \emph{finitely generated monoid} or an \emph{FGM}. An element $a \in M \setminus \{0\}$ is an \emph{atom} provided that for all $x,y \in M$ the equality $a = x+y$ implies that either $x=0$ or $y=0$. The set of atoms of $M$ is denoted by $\ii(M)$, and $M$ is \emph{atomic} if $M = \langle \ii(M) \rangle$. On the other hand, $M$ is \emph{antimatter} if $\mathcal{A}(M)$ is empty. Finitely generated monoids are atomic.

A subset $I$ of $M$ is an \emph{ideal} of $M$ if $I + M \subseteq I$, and an ideal $I$ is \emph{principal} if $I = x + M$ for some $x \in M$. If $y \in x + M$, then we say that $x$ \emph{divides} $y$ \emph{in} $M$ and write $x \mid_{M} y$. An element $p \in M \setminus \{0\}$ is a \emph{prime} provided that for all $x,y \in M$ satisfying $p \mid_M x+y$ either $p \mid_M x$ or $p \mid_M y$. Clearly, every prime is an atom. The monoid $M$ satisfies the \emph{ascending chain condition on principal ideals} or the \emph{ACCP}) if each increasing sequence of principal ideals of $M$ eventually stabilizes. If a monoid satisfies the ACCP, then it is atomic \cite[Proposition~1.1.4]{GH06}. Atomic monoids may not satisfy the ACCP, as we shall see in Proposition~\ref{prop:infinitely many atomic CARS without the ACCP}.

The \emph{difference group} of $M$, denoted here by $\gp(M)$, is the abelian group (unique up to isomorphism) satisfying that any abelian group containing a homomorphic image of~$M$ also must contain a homomorphic image of $\gp(M)$. The monoid $M$ is \emph{torsion-free} provided that $\gp(M)$ is a torsion-free abelian group. The monoids we are interested in this paper are torsion-free. On the other hand, the \emph{rank} of $M$, denoted by $\rank \, M$, is the rank of the $\zz$-module $\gp(M)$, or equivalently, the dimension of the $\qq$-vector space $\qq \otimes_\zz \gp(M)$. Clearly, $\qq \otimes_\zz \gp(M)$ contains an additive copy of the monoid $M$ via the embedding $M \hookrightarrow \gp(M) \hookrightarrow \qq \otimes_\zz \gp(M)$, where the last map is injective because $\qq$ is a flat $\zz$-module.

For the rest of this section, assume that $M$ is atomic. The free (commutative) monoid on $\ii(M)$ is denoted by $\mathsf{Z}(M)$. An element $z = a_1 + \cdots + a_\ell \in \mathsf{Z}(M)$, where $a_1, \dots, a_\ell \in \ii(M)$, is a \emph{factorization} in $M$ of \emph{length} $|z| := \ell$. As $\mathsf{Z}(M)$ is free, there exists a unique monoid homomorphism $\pi_M \colon \mathsf{Z}(M) \to M$ satisfying $\pi_M(a) = a$ for all $a \in \ii(M)$. When there seems to be no risk of ambiguity, we write $\pi$ instead of $\pi_M$. For each $x \in M$, we set
\[
	\mathsf{Z}(x) := \pi^{-1}(x) \subseteq \mathsf{Z}(M) \quad \text{ and } \quad \mathsf{L}(x) := \{|z| : z \in \mathsf{Z}(x)\}.
\]
Since $M$ is atomic, the sets $\mathsf{Z}(x)$ and $\mathsf{L}(x)$ are nonempty for all $x \in M$. If $|\mathsf{Z}(x)| < \infty$  (resp., $|\mathsf{L}(x)| < \infty$) for all $x \in M$, then $M$ is a \emph{finite factorization monoid} or an \emph{FFM} (resp., a \emph{bounded factorization monoid} or a \emph{BFM}). Clearly, every FFM is a BFM. It follows from \cite[Proposition~2.7.8]{GH06} that every FGM is an FFM, and it follows from \cite[Corollary~1.3.3]{GH06} that every BFM satisfies the ACCP. The bounded and finite factorization properties in the setting of integral domains were recently surveyed by D. F. Anderson and the second author in~\cite{AG21}. If $|\mathsf{Z}(x)| = 1$ (resp., $|\mathsf{L}(x)| = 1$) for all $x \in M$, then $M$ is a \emph{unique factorization monoid} or a \emph{UFM} (resp., a \emph{half-factorial monoid} or an \emph{HFM}). It is clear from the definitions that every UFM is both an FFM and an HFM. Finally, $M$ is a \emph{length-factorial monoid} or an \emph{LFM} provided that for all $x \in M$ and $z_1, z_2 \in \mathsf{Z}(x)$, the equality $|z_1| = |z_2|$ implies that $z_1 = z_2$. See the recent survey~\cite{GZ20} by A. Geroldinger and Q. Zhong for more background in factorizations in atomic monoids. A summary of the implications mentioned in this subsection is illustrated in the diagram of Figure~\ref{fig:AAZ's atomic chain for monoids including LFM/FGM}, which is an enhanced version of that of Figure~\ref{fig:AAZ's atomic chain for monoids}.
\smallskip

\begin{figure}[h] 
	\begin{tikzcd}
		\textbf{ATM }  \arrow[r, Leftarrow] \arrow[red, r, Rightarrow, "/"{anchor=center,sloped}, shift left=1.7ex] & \textbf{ LFM }  \arrow[r, Leftarrow] \arrow[red, r, Rightarrow, "/"{anchor=center,sloped}, shift left=1.7ex] & \textbf{ UFM } \  \arrow[r, Rightarrow] \arrow[red, r, Leftarrow, "/"{anchor=center,sloped}, shift left=1.7ex]  \arrow[d, Rightarrow, shift right=1ex] \arrow[red, d, Leftarrow, "/"{anchor=center,sloped}, shift left=1ex] & \ \textbf{ HFM }  \arrow[d, Rightarrow, shift right=1ex] \arrow[red, d, Leftarrow, "/"{anchor=center,sloped}, shift left=1ex] \\
		&\textbf{FGM}  \arrow[r, Rightarrow] \arrow[red, r, Leftarrow, "/"{anchor=center,sloped}, shift left=1.7ex] &\textbf{ FFM } \   \arrow[r, Rightarrow] \arrow[red, r, Leftarrow, "/"{anchor=center,sloped}, shift left=1.7ex] & \ \textbf{ BFM }  \arrow[r, Rightarrow] \arrow[red, r, Leftarrow, "/"{anchor=center,sloped}, shift left=1.7ex] & \textbf{ ACCP monoid}   \arrow[r, Rightarrow] \arrow[red, r, Leftarrow, "/"{anchor=center,sloped}, shift left=1.7ex] & \textbf{ ATM}
	\end{tikzcd}
	\caption{The implications in the diagram show the inclusions among the subclasses of atomic monoids we have introduced in this subsection. The diagram also emphasizes (with red marked arrows) that none of the shown implications is reversible.}
	\label{fig:AAZ's atomic chain for monoids including LFM/FGM}
\end{figure}
\smallskip

\smallskip
\subsection{Valuation Monoids and Semirings of $\nn_0[X]$}

As we have assumed for monoids, every semiring we deal with here is cancellative and commutative. Accordingly, we say that a triple $(S,+, \cdot)$, where $(S,+)$ and $(S \setminus \{0\}, \cdot)$ are monoids, is a \emph{semiring} if the multiplicative operation distributes over the additive operation, and the equality $0 \cdot x = 0$ holds for every $x \in S$. As it is customary, we shall denote a semiring $(S,+, \cdot)$ simply by $S$. The monoid $(S,+)$ is the \emph{additive monoid} of $S$.

All the semirings we consider in this paper are real homomorphic images of the semiring of polynomials $\nn_0[X]$. For $\alpha \in \rr \setminus \{0\}$, we call the semiring $\{f(\alpha) : f(X) \in \nn_0[X]\}$ the \emph{semiring valuation} of $\nn_0[X]$ at~$\alpha$ or simply a \emph{semiring valuation} of $\nn_0[X]$. It is clear that the semiring valuation of $\nn_0[X]$ at $\alpha$ is the subsemiring of $\rr$ generated by $\alpha$. Unless we explicitly state otherwise, from now on we reserve the notation $\nn_0[\alpha]$ for the additive monoid of the semiring valuation of $\nn_0[X]$ at~$\alpha$, which we call the \emph{monoid valuation} of $\nn_0[X]$ at $\alpha$ or simply a \emph{monoid valuation} of $\nn_0[X]$. When $\alpha$ is transcendental, the algebraic and arithmetic properties of $\nn_0[\alpha]$ are similar to those of the polynomial semiring $\nn_0[X]$. We will summarize them in the following proposition. For the sake of completeness, we include the proof of the next proposition; however, it is fair to remark that all the assertions of the proposition should be somehow known and are not too surprising in any case.

\begin{prop} \label{prop:transcendental case}
	Let $\tau \in \rr$ be a transcendental element. Then the following statements hold.
	\begin{enumerate}
		\item $\nn_0[\tau] = \bigoplus_{n \in \nn_0} \nn_0 \tau^n \cong \bigoplus_{n \in \nn} \nn_0$. 
		\smallskip
		
		\item $\nn_0[\tau]$ is a UFM and so an atomic monoid. Also, $\mathcal{A}(\nn_0[\tau]) = \{\tau^n : n \in \nn_0\}$.
		\smallskip
		
		\item $\emph{gp}(\nn_0[\tau]) \cong \bigoplus_{n \in \nn_0} \zz \tau^n$ and $\emph{\rank} \, \nn_0[\tau] = \aleph_0$.
		\smallskip
		
		\item $\nn_0[\tau] \cong \nn_0[X]$ as semirings. In particular, any two positive transcendental numbers give isomorphic semiring valuations of $\nn_0[X]$.
	\end{enumerate}
\end{prop}

\begin{proof}
	(1) It is clear that $\nn_0[\tau]$ is generated by $\{\tau^n : n \in \nn_0\}$ as a monoid, and the fact that $\tau$ is transcendental immediately implies that this set of generators is integrally independent. Therefore $\nn_0[\tau] = \bigoplus_{n \in \nn_0} \nn_0 \tau^n$, which is the free commutative monoid of rank $\aleph_0$.
	\smallskip
	
	(2) This is an immediate consequence of part~(1).
	\smallskip
	
	(3) Because the set $\{\tau^n : n \in \nn_0\}$ is integrally independent, $\gp(\nn_0[\tau]) \cong \bigoplus_{n \in \nn_0} \zz \tau^n$. In addition, the fact that $\{\tau^n : n \in \nn_0\}$ is a basis for the $\zz$-module $\gp(\nn_0[\tau])$ guarantees that $\rank \, \nn_0[\tau] = \aleph_0$.
	\smallskip
	
	(4) The ring homomorphism $\varphi \colon \zz[X] \to \rr$ consisting in evaluating at $\tau$ has trivial kernel because~$\tau$ is transcendental. Since $\varphi(\nn_0[X]) = \nn_0[\tau]$, the restriction of $\varphi$ to $\nn_0[X]$ is a semiring isomorphism between $\nn_0[X]$ and $\nn_0[\tau]$. The last statement follows immediately.
\end{proof}

When $\alpha$ is a nonzero algebraic number (resp., a nonzero rational number), we call $\nn_0[\alpha]$ an \emph{algebraic monoid valuation} (resp., a \emph{rational monoid valuation}) of $\nn_0[X]$.\footnote{The rational semiring valuations of $\nn_0[X]$ were recently investigated in \cite{CGG20} under the term `cyclic rational semirings'.} In light of Proposition~\ref{prop:transcendental case}, from now on we will primarily focus on algebraic monoid valuations of $\nn_0[X]$. Unlike the case when $\alpha$ is transcendental, when $\alpha$ is algebraic the monoid $\nn_0[\alpha]$ may not be a UFM. This is illustrated in the following examples.

\begin{example} \hfill \label{ex:two initial examples}
	\begin{enumerate}
		\item Consider the algebraic monoid valuation $\nn_0[\alpha]$, where $\alpha$ is the algebraic number $\alpha = 2 - \sqrt{2}$. The minimal polynomial of $\alpha$ is $m_\alpha(X) = X^2 - 4X + 2$. From the fact that $\alpha \in (0,1)$, it is not hard to show that $\mathcal{A}(\nn_0[\alpha]) = \{\alpha^n : n \in \nn_0\}$ (this is a special case of Theorem~\ref{thm:atomic characterization}). Now the identity $\alpha^2 - 4\alpha + 2 = 0$ ensures that $z_1 := 4\alpha$ and $z_2 := 2 + \alpha^2$ are two distinct factorizations of the same element in $\nn_0[\alpha]$, and so $\nn_0[\alpha]$ cannot be a UFM. Indeed, since $z_1$ and $z_2$ have different lengths, $\nn_0[\alpha]$ is not even an HFM.
		\smallskip
		
		\item Fix $q \in \qq \setminus \zz$ with $q > 1$, and consider the rational monoid valuation $\nn_0[q]$. It follows from \cite[Theorem~6.2]{GG18} that $\nn_0[q]$ is atomic with $\mathcal{A}(\nn_0[q]) = \{q^n : n \in \nn_0\}$. Indeed, since the generating sequence $(q^n)_{n \in \nn_0}$ is increasing, \cite[Theorem~5.6]{fG19} guarantees that $\nn_0[q]$ is an FFM. On the other hand, we observe that $z_1 := \mathsf{n}(q)$ and $z_2 := \mathsf{d}(q)q$ are two factorizations of the same element in $\nn_0[q]$, namely, $\mathsf{n}(q)$. Finally, the fact that $z_1$ and $z_2$ have different lengths implies that $\nn_0[q]$ is not an HFM. In particular, $\nn_0[q]$ is not a UFM.
	\end{enumerate}
\end{example} 

Assume that $\alpha \in \mathbb{C}$ is algebraic over $\qq$. It should not come as a surprise that in our study of an algebraic monoid valuation $\nn_0[\alpha]$, the minimal polynomial $m_{\alpha}(X) \in \qq[X]$ of $\alpha$ plays a crucial role. We call the set of exponents of the monomials appearing in the canonical representation of a polynomial $f(X) \in \qq[X]$ the \emph{support} of $f(X)$, and we denote it by $\supp \, f(X)$, that is,
\[
	\supp \, f(X) := \{n \in \nn_0 : f^{(n)}(0) \neq 0 \},
\]
where $f^{(n)}$ denotes the $n$-th derivative of $f$. Clearly, there is a unique $\ell \in \nn$ such that the polynomial $\ell m_\alpha(X) \in \zz[X]$ has content~$1$ (that is, the greatest common divisor of all the coefficients of $\ell m_\alpha(X)$ is~$1$). In addition, there exist unique polynomials $p_\alpha(X), q_\alpha(X) \in \nn_0[X]$ with $\ell m_\alpha(X) = p_\alpha(X) - q_\alpha(X)$ and $\supp \, p_\alpha(X) \bigcap \supp \, q_\alpha(X) = \emptyset$. We call the pair $(p_\alpha(X), q_\alpha(X))$ the \emph{minimal pair} of $\alpha$. Finally, we recall that the conjugates of  $\alpha$ (over $\qq$) are the complex roots of $m_\alpha(X)$.

Descartes' Rule of Signs states that the number of variations of sign of a polynomial $f(X) \in \rr[X]$ has the same parity as and is at least the number of positive roots of $f(X)$ (counting multiplicity). In addition, it was proved by D. R. Curtiss \cite{dC18} that there exists a polynomial $\phi(X) \in \rr[X]$, which he called a Cartesian multiplier, such that the number of variations of sign of $\phi(X)f(X)$ equals the number of positive roots of $f(X)$ (counting multiplicity). Using the density of $\qq$ in the real line, one can take such Cartesian multipliers in $\zz[X]$. We proceed to record Curtiss' result for future reference.

\begin{theorem} \cite[Section~5]{dC18} \label{thm:Curtiss' multipliers}
	For each $f(X) \in \rr[X]$, there exists $\phi(X) \in \zz[X]$ such that the number of variations of sign of $\phi(X)f(X)$ equals the number of positive roots of $f(X)$.
\end{theorem}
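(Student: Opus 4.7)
The plan is to follow the classical Pólya-style approach. First I would factor $f(X)$ over $\rr$ as
\[
	f(X) = c \cdot X^e \cdot g(X) \cdot h(X),
\]
where $c \in \rr \setminus \{0\}$, $e \in \nn_0$, $g(X) = \prod_{i=1}^k (X - r_i)$ collects the positive real roots $r_1, \ldots, r_k$ of $f$ counted with multiplicity, and $h(X) \in \rr[X]$ has no non-negative real roots. Letting $V(u)$ denote the number of variations of sign of $u \in \rr[X]$, Vieta's formulas immediately give $V(g) = k$, since the coefficients of $g(X)$ strictly alternate in sign.

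The crucial step is the following Pólya-type lemma, which I would establish next: \emph{if $h \in \rr[X]$ has no non-negative real roots, then there exists $N \in \nn$ such that every coefficient of $(1+X)^N h(X)$ has the same strict sign}. To prove it, I would assume without loss of generality that $h(x) > 0$ on $[0, \infty)$ and apply the substitution $X = t/(1-t)$, which maps $[0, \infty)$ homeomorphically onto $[0, 1)$. The rescaled polynomial $P(t) := (1-t)^{\deg h} \, h\bigl(t/(1-t)\bigr)$ extends continuously to $[0, 1]$ with $P(1)$ equal to the positive leading coefficient of $h$, so $P$ is bounded below by some $\varepsilon > 0$ on the compact interval $[0, 1]$. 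A routine asymptotic analysis of the binomial coefficients in the expansion $(1+X)^N h(X) = \sum_j A_{j,N} X^j$ then shows that $A_{j,N}/\binom{N+\deg h}{j}$ approximates $P\bigl(j/(N+\deg h)\bigr)$ uniformly in $j$ as $N \to \infty$, so $A_{j,N} > 0$ for every admissible $j$ once $N$ is sufficiently large.

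Granting the lemma, I would set $\phi(X) := (1+X)^N \in \zz[X]$. Since $\phi$ has no positive real roots, $\phi f$ retains precisely the $k$ positive real roots of $f$ (with multiplicity). Because $\phi h$ has all coefficients of the same sign we have $V(\phi h) = 0$, and multiplying by the monomial $c X^e$ does not alter the sign-variation count, so the classical inequality $V(uv) \leq V(u) + V(v)$ yields
\[
	V(\phi f) = V\bigl(g \cdot (\phi h)\bigr) \leq V(g) + V(\phi h) = k + 0 = k.
\]
On the other hand, Descartes' rule of signs forces $V(\phi f) \geq k$ with matching parity, so $V(\phi f) = k$, as required.

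The main obstacle is the Pólya-type lemma; once it is in hand, the remainder of the argument is a clean assembly of Descartes' rule of signs and the standard inequality $V(uv) \leq V(u) + V(v)$, with the fact that $\phi = (1+X)^N$ already lies in $\zz[X]$ obviating any further rationality approximation.
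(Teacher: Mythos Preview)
The paper does not actually prove this theorem: it is quoted from Curtiss~\cite{dC18} as a background result, with the single added remark that ``using the density of $\qq$ in the real line, one can readily see that Cartesian multipliers can always be taken in $\zz[X]$.'' So there is no proof in the paper to compare against; you have supplied one where the authors simply cite the literature.

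Your argument is correct and is essentially P\'olya's proof rather than Curtiss's original one. The factorization $f = cX^e g h$, the fact that $g(X) = \prod_{i=1}^k (X - r_i)$ with $r_i > 0$ has exactly $k$ sign variations, and the P\'olya lemma that $(1+X)^N h(X)$ eventually has all coefficients of one sign when $h$ has no roots in $[0,\infty)$ are all standard. The only place to be slightly careful is your appeal to ``the classical inequality $V(uv) \le V(u) + V(v)$'': you only use the special case $V(u) = 0$, i.e.\ that multiplying by a polynomial with all coefficients of one sign does not increase the number of sign variations, and that case is both elementary and sufficient here. One pleasant by-product of your route is that $\phi(X) = (1+X)^N$ already lies in $\zz[X]$, so the density-of-$\qq$ argument the paper invokes to pass from Curtiss's $\phi \in \rr[X]$ to $\phi \in \zz[X]$ becomes unnecessary.
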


\bigskip
\section{Algebraic Considerations}

In this section, we address two algebraic aspects for semiring valuations of $\nn_0[X]$: we determine their additive rank, and we find necessary and sufficient conditions for two such semiring valuations to be isomorphic. Although many other algebraic aspects of these valuations are rather nontrivial and worthy of a more extensive study, here we only settle down the algebraic considerations that we will need in order to investigate their additive atomic structure.

\begin{lemma} \label{lem:negative generators}
	For a nonzero algebraic number $\beta$, the following statements hold.
	\begin{enumerate}
		\item If $\alpha$ is algebraic and has no positive conjugates, then $\nn_0[\alpha] = \zz[\alpha]$.
		\smallskip
		
		\item If $\alpha$ is algebraic and has a positive conjugate $\beta$, then $\nn_0[\alpha] \cong \nn_0[\beta]$, and so the monoid valuation $\nn_0[\alpha]$ is reduced.
	\end{enumerate}
\end{lemma}

\begin{proof}
	(1) Suppose now that $\alpha$ has no positive conjugates. Since the polynomial $m_\alpha(X)$ has no positive roots, Theorem~\ref{thm:Curtiss' multipliers} guarantees the existence of a nonzero polynomial $\phi(X) \in \zz[X]$ such that $\phi(X)m_\alpha(X) = \sum_{i=0}^k c_iX^i \in \nn_0[X]$. We can assume, without loss of generality, that $\phi(0) \neq 0$, that is, $c_0 \neq 0$. Since $\alpha$ is a root of $\phi(X)m_\alpha(X)$, we see that $-c_0 = \sum_{i=1}^k c_i \alpha^i \in \nn_0[\alpha] \cap \zz_{< 0}$ and, therefore, $-1 = -c_0 + (c_0 - 1) \in \nn_0[\alpha]$. As a consequence, $\{ \pm \alpha^n : n \in \nn_0 \}$ is contained in $\nn_0[\alpha]$, from which we obtain that $\nn_0[\alpha] = \zz[\alpha]$.
	\smallskip
	
	(2) Let $\omega$ be a real conjugate of $\alpha$ over $\qq$ (not necessarily positive), and consider the polynomial $m(X) = p(X) - q(X) \in \zz[X]$, where $(p(X),q(X))$ is the minimal pair of $\omega$. Let $R$ be the integral domain $\zz[X]/I$, where~$I$ is the prime ideal generated by $m(X)$ in $\zz[X]$. The ring homomorphism $\zz[X] \to \rr$ given by the assignments $f \mapsto f(\omega)$ (for all $f \in \zz[X]$) induces a ring isomorphism $\varphi \colon R \to \zz[\omega]$, namely, $\varphi \colon f(X) + I \mapsto f(\omega)$. Now observe that the set $S(X)$ of cosets of $\zz[X]/I$ having a representative in $\nn_0[X]$ is a subsemiring of $R$ satisfying $\varphi(S(X)) = \nn_0[\omega]$. Note that $\varphi(S(X))$ does not depend on $\omega$ but only on $m(X)$. Hence $\nn_0[\alpha] \cong S(X) \cong \nn_0[\beta]$ as semirings, which implies that $\nn_0[\alpha] \cong \nn_0[\beta]$ as monoids.
\end{proof}

As the factorization-theoretical aspects of groups and free commutative monoids are trivial (both are UFMs), by virtue of Lemma~\ref{lem:negative generators} there is no loss of generality in restricting our attention to monoid valuations $\nn_0[\alpha]$, where $\alpha$ is a positive algebraic number. We shall do so from now on. The following example illustrates another application of Lemma~\ref{lem:negative generators}.

\begin{example}
	For $\alpha = 2 - \sqrt{2}$, whose minimal polynomial is $m_\alpha(X) = X^2 - 4X + 2$, we have verified in Example~\ref{ex:two initial examples}(1) that the algebraic monoid valuation $\nn_0[\alpha]$ is not an HFM. Proving that $\nn_0[\alpha]$ is not an FFM directly may not be that simple. However, observe that $\beta = 2 + \sqrt{2}$ is also a root of $m_\alpha(X)$, that is, $\beta$ is a positive conjugate of $\alpha$. Thus, it follows from Lemma~\ref{lem:negative generators}(2) that $\nn_0[\alpha] \cong \nn_0[\beta]$. Now since $\beta > 1$, the generating sequence $(\beta^n)_{n \in \nn_0}$ of $\nn_0[\beta]$ is increasing and, therefore, \cite[Theorem~5.6]{fG19} guarantees that $\nn_0[\beta]$ is an FFM. Hence we can conclude that the monoid $\nn_0[\alpha]$ is an FFM that is not an HFM.
\end{example} 

The examples of algebraic monoid valuations that we have seen so far are FFMs. However, even rational monoid valuations may not be FFMs.

\begin{example} \label{ex:cyclic ratinoal semirings}
	Fix $q \in \qq \cap (0,1)$. If $q = \frac 1k$ for some $k \in \nn$, then one can readily check that the rational monoid valuation $\nn_0[q]$ contains no atoms, and so it is not even atomic. On the other hand, assume that $q \neq \frac 1k$ for any $k \in \nn$. It is shown in \cite[Theorem~6.2]{GG18} that $\nn_0[q]$ is atomic with $\mathcal{A}(\nn_0[q]) = \{q^n : n \in \nn_0\}$. However, since $\mathsf{d}(q) q^n = (\mathsf{d}(q) - \mathsf{n}(q))q^n + \mathsf{d}(q) q^{n+1}$ for every $n \in \nn_0$, the sequence of principal ideals $(\mathsf{d}(q)q^n + \nn_0[q])_{n \in \nn_0}$ is ascending. Since the same sequence of principal ideals does not terminate, the monoid $\nn_0[q]$ does not satisfy the ACCP. In particular, $\nn_0[q]$ is not an FFM. We finally observe that the fact that $\nn_0[q]$ does not satisfy the ACCP can also be inferred as a consequence of Theorem~\ref{thm:ACCP necessary condition} (see Corollary~\ref{cor:rational cyclic semiring that are not ACCP}).
\end{example}

Let us find a formula for the rank of an algebraic monoid valuation.

\begin{prop} \label{prop:rank of CARs}
	For $\alpha \in \rr_{>0}$, the equality $\emph{gp}(\nn_0[\alpha]) = \{f(\alpha) : f(X) \in \zz[X]\}$ holds. In addition, if~$\alpha$ is algebraic, then $\emph{\rank} \, \nn_0[\alpha] = \deg m_\alpha(X)$.
\end{prop}

\begin{proof}
	Set $G := \{f(\alpha) : f(X) \in \zz[X]\}$. Since $G$ is an abelian group containing $\nn_0[\alpha]$, it follows that $\gp(\nn_0[\alpha]) \subseteq G$. Observe, on the other hand, that the generating set $\{\pm \alpha^n : n \in \nn_0\}$ of $G$ is contained in $\gp(\nn_0[\alpha])$. As a consequence, $G \subseteq \gp(\nn_0[\alpha])$.
	\smallskip
	
	Suppose now that $\alpha$ is algebraic, and write $m_\alpha(X) = X^d -\sum_{j=0}^{d-1} c_j X^j$ for $c_0, \dots, c_{d-1} \in \qq$. Consider $\nn_0[\alpha]$ as an additive submonoid of the $\qq$-vector space $\qq \otimes_\zz \gp(\nn_0[\alpha])$ via the embedding $\nn_0[\alpha]  \hookrightarrow \gp(\nn_0[\alpha]) \hookrightarrow \qq \otimes_\zz \gp(\nn_0[\alpha])$. Let $V$ denote the subspace of $\qq \otimes_\zz \gp(\nn_0[\alpha])$ spanned by the set $S := \{\alpha^j : j \in \ldb 0,d-1 \rdb \}$. Since $\alpha^{d+n} = \sum_{j=0}^{d-1} c_j \alpha^{j+n}$ for every $n \in \nn_0$, we can argue inductively that $\alpha^n \in V$ for every $n \in \nn_0$. Hence $V = \qq \otimes_\zz \gp(\nn_0[\alpha])$. Since $\alpha$ is an algebraic number of degree $d$, it immediately follows that $S$ is a linearly independent over $\qq$ and so a basis for $V$. Hence $\rank \, \nn_0[\alpha] = \dim V = d$.
\end{proof}

We conclude this section by determining the isomorphism classes of semiring valuations of $\nn_0[X]$ whose additive monoids are atomic.

\begin{prop} \label{prop:isomorphism classes of CARS}
	For a nonzero $\alpha \in \rr$, suppose that the monoid $\nn_0[\alpha]$ is atomic. Then the following statements hold.
	\begin{enumerate}
		
		\item If $\alpha$ is rational, then for each $\beta \in \rr_{> 0}$ there exists a semiring isomorphism $\nn_0[\alpha] \cong \nn_0[\beta]$ if and only if $\nn_0[\alpha] = \nn_0[\beta] = \nn_0$ or $\alpha = \beta$.
		\smallskip
		
		\item If $\alpha$ is algebraic but not rational, then for each $\beta \in \rr_{> 0}$ there exists a semiring isomorphism $\nn_0[\alpha] \cong \nn_0[\beta]$ if and only if $\beta$ is an algebraic conjugate of $\alpha$.
	\end{enumerate}
\end{prop}

\begin{proof}
	(1) If $\alpha$ is rational, then $\rank \, \nn_0[\alpha] = 1$ by Proposition~\ref{prop:rank of CARs}. Suppose, for the direct implication, that $\varphi \colon \nn_0[\alpha] \to \nn_0[\beta]$ is a semiring isomorphism, and so $\nn_0[\alpha] \cong \nn_0[\beta]$ as semirings. Since $\rank \, \nn_0[\beta] = 1$, Proposition~\ref{prop:rank of CARs} ensures that $\beta$ is also rational. Then both $\nn_0[\alpha]$ and $\nn_0[\beta]$ are additive submonoids of $\qq_{\ge 0}$, and so \cite[Proposition~3.2]{fG18} guarantees the existence of $q \in \qq_{>0}$ such that $\varphi(x) = qx$ for all $x \in \nn_0[\alpha]$. Now, $\varphi(1) = 1$ implies that $q=1$, and so $\nn_0[\alpha] = \nn_0[\beta]$. Suppose that $\nn_0[\alpha] \neq \nn_0$. Then it follows from \cite[Proposition~4.3]{CGG20a} that either $\nn_0[\alpha] = \nn_0[\beta] = \nn_0$ or $\{\alpha^n : n \in \nn_0\} = \mathcal{A}(\nn_0[\alpha]) = \mathcal{A}(\nn_0[\beta]) = \{\beta^n : n \in \nn_0\}$.  In the later case, it is clear that $\alpha = \beta$. The reverse implication is straightforward.
	\smallskip
	
	(2) We will only prove the direct implication because the reverse implication follows the same lines as the proof of part~(2) of Lemma~\ref{lem:negative generators}. To begin with, we claim that $\alpha \in \mathcal{A}(\nn_0[\alpha])$. Suppose, otherwise, that $\alpha \notin \mathcal{A}(\nn_0[\alpha])$. Then there are elements $x,y \in \nn_0[\alpha] \setminus \{0\}$ such that $\alpha = x+y$, and so $\alpha^n = \alpha^{n-1}x + \alpha^{n-1}y$ for every $n \in \nn$, and so no positive power of $\alpha$ is an atom of $\nn_0[\alpha]$. Because $\mathcal{A}(\nn_0[\alpha]) \subseteq \{\alpha^n : n \in \nn_0\}$, this implies that $\mathcal{A}(\nn_0[\alpha]) \subseteq \{1\}$, which is not possible because $\nn_0[\alpha]$ is atomic. Let $\varphi \colon \nn_0[\alpha] \to \nn_0[\beta]$ be a semiring isomorphism for some $\beta \in \rr_{>0}$. Since $\alpha$ is algebraic but not rational, one can use Proposition~\ref{prop:rank of CARs} to deduce that $\beta$ is also algebraic but not rational. We have already argue that $\beta \in \mathcal{A}(\nn_0[\beta])$. Now take $f(X) \in \nn_0[X]$ such that $\varphi(f(\alpha)) = \beta$. As $\beta \in \mathcal{A}(\nn_0[\beta])$ and $\beta = f(\varphi(\alpha))$, it follows that $f(X)$ is a monic monomial, and so $\beta = \varphi(\alpha^k)$ for some $k \in \nn$. Also, if $g(X) \in \nn_0[X]$ satisfies $\varphi(\alpha) = g(\beta)$, then $\varphi(\alpha) = g(\beta) = g(\varphi(\alpha^k)) = \varphi (g(\alpha^k))$, and so $\alpha = g(\alpha^k)$. Because $\alpha \in \mathcal{A}(\nn_0[\alpha])$, one finds that $g(X)$ must be a monic monomial, which implies that $\alpha = \alpha^{kn}$ for some $n \in \nn$. As a result, $k = n = 1$, and so $\beta = \varphi(\alpha)$. Since $\varphi$ is, in particular, a monoid isomorphism between $\nn_0[\alpha]$ and $\nn_0[\beta]$, it uniquely extends to a group isomorphism $\varphi \colon \gp(\nn_0[\alpha]) \to \gp(\nn_0[\beta])$ (also denoted by~$\varphi$). Thus, $m_\alpha(\beta) = m_\alpha(\varphi(\alpha)) = \varphi(m_\alpha(\alpha)) = \varphi(0) = 0$, which implies that $m_\alpha(X)$ is also the minimal polynomial of $\beta$.
\end{proof}

\bigskip
\section{Atomicity and the ACCP}
\label{sec:additive structure}

In this section, we begin to study the atomic structure of monoid valuations of $\nn_0[X]$ with a focus on the properties of being atomic and satisfying the ACCP.

\smallskip
\subsection{Atomicity}

To begin with, we determine the values of algebraic numbers $\alpha \in \rr_{> 0}$ for which $\nn_0[\alpha]$ is atomic, and we describe its set of atoms. It is clear that for any positive algebraic number $\alpha$, the inclusion $\mathcal{A}(\nn_0[\alpha]) \subseteq \{\alpha^n : n \in \nn_0\}$ holds. However, the reverse inclusion does not always hold; for instance, we have seen in Example~\ref{ex:cyclic ratinoal semirings} that $\mathcal{A}(\nn_0[\frac 1k])$ is empty for every $k \in \nn_{\ge 2}$. In addition, the following example shows a positive algebraic number $\alpha$ such that $1 \notin \mathcal{A}(\nn_0[\alpha])$, resulting in $\mathcal{A}(\nn_0[\alpha])$ being empty.

\begin{example}
	Consider the positive algebraic number $\alpha = \frac{\sqrt{5} - 1}{2}$, whose minimal polynomial is $m_\alpha(X) = X^2 + X - 1$. As $\alpha$ is a root of $m_\alpha(X)$, the identity $1 = \alpha^2 + \alpha$ holds. Therefore $1 \notin \mathcal{A}(\nn_0[\alpha])$. After multiplying the previous identity by $\alpha^n$ (for any $n \in \nn_0$), we see that $\alpha^n \notin \nn_0[\alpha]$ for any $n \in \nn$. Hence $\nn_0[\alpha]$ is an antimatter monoid.
\end{example}

In the following theorem, we completely determine when an algebraic monoid valuation $\nn_0[\alpha]$ is atomic by noticing that $1 \notin \mathcal{A}(\nn_0[\alpha])$ is the only condition preventing $\nn_0[\alpha]$ from being atomic. In the same theorem, we explicitly describe the set of atoms of $\nn_0[\alpha]$. 

\begin{theorem} \label{thm:atomic characterization}
	 For each algebraic $\alpha \in \rr_{>0}$, the monoid $\nn_0[\alpha]$ is atomic if and only if $1 \in \mathcal{A}(\nn_0[\alpha])$, and $\nn_0[\alpha]$ is antimatter otherwise. Also, if $\nn_0[\alpha]$ is atomic, then there is a $\sigma \in \nn \cup \{\infty\}$ such that
	 \begin{equation} \label{eq:set of atoms}
	 		\mathcal{A}(\nn_0[\alpha]) = \{\alpha^n : n \in [0, \sigma) \cap \nn_0 \}.
	 \end{equation}
	 If $\nn_0[\alpha]$ is finitely generated (and so atomic), then $\sigma =  \min \{n \in \nn : \alpha^n \in \langle \alpha^j : j \in \ldb 0,n-1 \rdb \rangle \}$.
\end{theorem}

\begin{proof}
	First, we observe that if $\alpha^k \notin \mathcal{A}(\nn_0[\alpha])$ for some $k \in \nn_0$, then $\alpha^{k+1} \notin \mathcal{A}(\nn_0[\alpha])$. Indeed, every sum decomposition of $\alpha^k$ of the form $\alpha^k = \sum_{i=0}^m c_i \alpha^i$ with coefficients $c_0, \dots, c_m \in \nn_0$ yields the sum decomposition $\alpha^{k+n} = \sum_{i=0}^m c_i \alpha^{i+n}$ of $\alpha^{k+n}$ for every $n \in \nn$. Therefore if $1 \notin \mathcal{A}(\nn_0[\alpha])$, then the set of atoms of $\nn_0[\alpha]$ is empty, and so $\nn_0[\alpha]$ is an antimatter monoid. In this case, $\nn_0[\alpha]$ cannot be atomic, from which the direct implication of the first statement follows.
	\smallskip
	
	Conversely, suppose that $1 \in \mathcal{A}(\nn_0[\alpha])$. Notice first that if $\alpha \ge 1$, then for every $n \in \nn$ there are only finitely many elements of $\nn_0[\alpha]$ in the interval $[0,n]$ and, therefore, the set $\nn_0[\alpha]$ can be listed increasingly. In this case, $\nn_0[\alpha]$ is atomic by~\cite[Theorem~5.6]{fG19}. Thus, one can assume that $\alpha < 1$. Since $\alpha < 1$, it follows that $\alpha^i \nmid_{\nn_0[\alpha]} \alpha^j$ when $i < j$. This, along with the fact that $1 \in \mathcal{A}(\nn_0[\alpha])$, implies that $\alpha^n \in \mathcal{A}(\nn_0[\alpha])$ for every $n \in \nn_0$. Hence $\nn_0[\alpha]$ is an atomic monoid.
	\smallskip
	
	We have seen before that $\nn_0[\alpha]$ is antimatter if $1 \notin \mathcal{A}(\nn_0[\alpha])$, and it is clear that $1 \notin \nn_0[\alpha]$ when $\nn_0[\alpha]$ is antimatter. As a result, $\nn_0[\alpha]$ is either atomic or antimatter, which completes the proof of the first statement. 
	\smallskip
	
	To argue the second statement, assume that $\nn_0[\alpha]$ is atomic. Now let $\sigma$ be the smallest $n \in \nn \cup \{\infty\}$ such that $\alpha^n \in \langle \alpha^j : j \in \ldb 0,n-1 \rdb \rangle$. We split the rest of the proof into the following two cases.
	\smallskip
	
	\noindent \emph{Case 1:} $\sigma < \infty$. In this case, the inclusion $\alpha^{\sigma} \in \langle \alpha^n : n \in \ldb 0, \sigma-1 \rdb \rangle$ holds, which implies that $\alpha \ge 1$. Then it follows from our initial observation that $\alpha^n \notin \mathcal{A}(\nn_0[\alpha])$ for any $n \ge \sigma$. Now suppose that $\alpha^m = \sum_{i=0}^k c_i \alpha^i$ for $m < \sigma$ and for some $c_0, \dots, c_k \in \nn_0$ with $c_k > 0$. Since $\alpha \ge 1$, it follows that $k \le m$. However, $k < m$ would contradict the minimality of $\sigma$. Hence $k = m$, which implies that $\alpha^m$ is an atom of $\nn_0[\alpha]$. As a result, $\mathcal{A}(\nn_0[\alpha]) = \{\alpha^n : n \in \ldb 0, \sigma-1 \rdb\}$.
	\smallskip
	
	\noindent \emph{Case 2:} $\sigma = \infty$. In this case, $\alpha \neq 1$. If $\alpha > 1$, then $\alpha^n \nmid_{\nn_0[\alpha]} \alpha^m$ when $n > m$, which implies that $\alpha^m \in \mathcal{A}(\nn_0[\alpha])$ if and only if $\alpha^m \notin \langle \alpha^n : n \in \ldb 0, m-1 \rdb \rangle$. Thus, $\mathcal{A}(\nn_0[\alpha]) = \{\alpha^n : n \in \nn_0\}$. On the other hand, assume that $\alpha < 1$ and, therefore, that $0$ is a limit point of $\nn_0[\alpha] \setminus \{0\}$. Then $\mathcal{A}(\nn_0[\alpha]) = \{\alpha^n : n \in \nn_0\}$ as, otherwise, our initial observation would imply that $\nn_0[\alpha]$ is finitely generated, contradicting that $0$ is a limit point of $\nn_0[\alpha] \setminus \{0\}$.
	\smallskip
	
	When $\nn_0[\alpha]$ is finitely generated, $\mathcal{A}(\nn_0[\alpha])$ is finite, and so the fact that $\mathcal{A}(\nn_0) \subseteq \{\alpha^n : n \in \nn_0\}$ ensures that $\sigma < \infty$. Hence the last statement follows from the argument given in Case~1.
\end{proof}

\begin{remark}
	With the notation as in Theorem~\ref{thm:atomic characterization}, when $\nn_0[\alpha]$ is atomic the fact that $\{\alpha^n : n < \sigma\}$ is a basis for the free commutative monoid $\mathsf{Z}(\nn_0[\alpha])$  allows us to naturally embed $\mathsf{Z}(\nn_0[\alpha])$ into $\nn_0[X]$. Thus, we can identify a factorization $z$ in $\mathsf{Z}(\nn_0[\alpha])$ with the polynomial $z(X)$ in $\nn_0[X]$ obtained after replacing $\alpha$ by $X$. We shall use this identification throughout the paper without explicit mention.
\end{remark}

As the property of being finitely generated is relevant in the context of this paper, we highlight the following characterization of the finitely generated monoids $\nn_0[\alpha]$, which is an immediate consequence of Theorem~\ref{thm:atomic characterization}.

\begin{cor}
	For $\alpha \in \rr_{>0}$, the monoid $\nn_0[\alpha]$ is an FGM if and only if there is an $n \in \nn_0$ such that $\mathcal{A}(\nn_0[\alpha]) = \{\alpha^j : j \in \ldb 0,n \rdb \}$.
\end{cor}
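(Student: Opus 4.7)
The plan is to prove both implications by invoking Theorem~\ref{thm:atomic characterization} together with the elementary fact that in a reduced cancellative monoid every generating set must contain all the atoms, so finitely generated monoids have only finitely many atoms.

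For the forward direction, suppose $\nn_0[\alpha]$ is an FGM. Since FGMs are atomic (as stated in Section~\ref{sec:background}), $\nn_0[\alpha]$ is atomic. If $S$ is a finite generating set of the reduced monoid $\nn_0[\alpha]$, then $\mathcal{A}(\nn_0[\alpha])\subseteq S$, because every atom written as a sum of elements of $S$ must coincide with one of them. Hence $\mathcal{A}(\nn_0[\alpha])$ is finite. Looking at the three cases listed in Theorem~\ref{thm:atomic characterization}, the transcendental case and the algebraic case with $\sigma=\infty$ both yield $\mathcal{A}(\nn_0[\alpha])=\{\alpha^n:n\in\nn_0\}$, which is infinite. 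Therefore $\alpha$ must be algebraic with $\sigma<\infty$, and Theorem~\ref{thm:atomic characterization} then gives $\mathcal{A}(\nn_0[\alpha])=\{\alpha^j:j\in\ldb 0,\sigma-1\rdb\}$; taking $n=\sigma-1$ produces the claimed form.

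For the reverse direction, assume $\mathcal{A}(\nn_0[\alpha])=\{\alpha^j:j\in\ldb 0,n\rdb\}$ for some $n\in\nn_0$. Then $1=\alpha^0\in\mathcal{A}(\nn_0[\alpha])$, so condition~(c) of Theorem~\ref{thm:atomic characterization} holds and $\nn_0[\alpha]$ is atomic. Consequently $\nn_0[\alpha]=\langle\mathcal{A}(\nn_0[\alpha])\rangle=\langle\alpha^0,\alpha,\dots,\alpha^n\rangle$, and this is a finite generating set, whence $\nn_0[\alpha]$ is an FGM.

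There is no real obstacle; the only point requiring care is the standard observation that atoms lie in every generating set of a reduced monoid, which is what converts ``finitely generated'' into ``finitely many atoms'' and lets the trichotomy of Theorem~\ref{thm:atomic characterization} do the rest.
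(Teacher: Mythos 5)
Your argument is correct and is exactly the reasoning the paper leaves implicit when it states this corollary as an ``immediate consequence'' of Theorem~\ref{thm:atomic characterization}: atomicity plus the observation that every generating set of a reduced monoid contains all atoms reduces the forward direction to the trichotomy of the theorem, and the reverse direction follows since an atomic monoid is generated by its (here finite) set of atoms. No gaps; the one point needing care (atoms lie in every generating set, which uses that a sum of nonzero elements in a reduced cancellative monoid is nonzero) is correctly identified and handled.
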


We proceed to provide the two sufficient conditions for an algebraic monoid valuation $\nn_0[\alpha]$ to be atomic in terms of the minimal polynomial of $\alpha$.

\begin{prop} \label{prop:atomicity sufficient conditions}
	Let $\alpha \in \rr_{>0}$ be an algebraic number with minimal polynomial $m_\alpha(X)$. Then the following statements hold.
	\begin{enumerate}
		\item If $\alpha \notin \qq$ and $|m_\alpha(0)| \neq 1$, then $\nn_0[\alpha]$ is atomic.
		\smallskip
		
		\item If $m_\alpha(X)$ has more than one positive root, then $\nn_0[\alpha]$ is atomic.
	\end{enumerate}
\end{prop}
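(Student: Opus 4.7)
The plan is to invoke Theorem~\ref{thm:atomic characterization}, which reduces atomicity of $\nn_0[\alpha]$ to the assertion $1 \in \mathcal{A}(\nn_0[\alpha])$. In each part I would argue by contradiction: assuming $1 \notin \mathcal{A}(\nn_0[\alpha])$, I aim to contradict the relevant hypothesis. As in the proof of (b)$\Rightarrow$(c) of Theorem~\ref{thm:atomic characterization}, since $\alpha > 0$ and all coefficients in any polynomial representation lie in $\nn_0$, any decomposition $1 = x + y$ with $x, y \in \nn_0[\alpha] \setminus \{0\}$ must have both summands of zero constant term; combining representations of $x$ and $y$ produces $c_1, \dots, c_k \in \nn_0$ with $\sum_i c_i \ge 2$ such that $1 = \sum_{i=1}^{k} c_i \alpha^i$. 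Setting $f(X) := 1 - \sum_{i=1}^{k} c_i X^i \in \zz[X]$, one has $f(\alpha) = 0$, so $m_\alpha(X) \mid f(X)$ in $\qq[X]$. Write $f(X) = m_\alpha(X) g(X)$ with $g(X) \in \qq[X]$.

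For part (2), I would apply Descartes' Rule of Signs to $f(X)$. Its constant coefficient is $+1$ and all higher coefficients are non-positive (with at least one strictly negative), so $f(X)$ has exactly one variation of sign and therefore at most one positive real root counted with multiplicity. On the other hand, the divisibility $m_\alpha(X) \mid f(X)$ in $\qq[X]$ forces every positive root of $m_\alpha$ to be a positive root of $f$ of no smaller multiplicity; so if $m_\alpha$ has more than one positive root (counted with multiplicity), the Descartes bound is violated, giving the required contradiction.

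For part (1), I would first upgrade the factorization $f = m_\alpha g$ into $\zz[X]$. Let $\ell \in \nn$ be the unique positive integer (as recalled in Section~\ref{sec:background}) such that $\ell m_\alpha(X) \in \zz[X]$ is primitive. The identity $\ell f(X) = (\ell m_\alpha(X)) g(X)$, combined with $\ell f(X) \in \zz[X]$ and the primitivity of $\ell m_\alpha(X)$, yields via Gauss's lemma that $g(X) \in \zz[X]$, so $g(0) \in \zz$. Evaluating $f = m_\alpha g$ at $X = 0$ then produces the relation $1 = m_\alpha(0) \cdot g(0)$ with $g(0) \in \zz$; the integrality/content constraints on $m_\alpha$ arising from its minimality together with the hypothesis $|m_\alpha(0)| \ne 1$ deliver the desired contradiction.

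The main technical step is the Gauss's-lemma upgrade in part (1), which controls the denominators of $g$; part (2) then reduces to a single invocation of Descartes' Rule of Signs once the correct polynomial $f(X)$ has been identified.
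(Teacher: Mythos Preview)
Your approach is exactly the paper's: both parts argue by contradiction via Theorem~\ref{thm:atomic characterization}, construct $f(X) = 1 - \sum_{i\ge 1} c_i X^i \in \zz[X]$ with $f(\alpha)=0$, then handle part~(2) by Descartes' Rule of Signs and part~(1) by Gauss's Lemma. Part~(2) is complete and matches the paper.

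For part~(1) there is a genuine gap, precisely at the step you yourself flag as the crux. From $1 = f(0) = m_\alpha(0)\, g(0)$ with $g(0)\in\zz$ one cannot deduce $|m_\alpha(0)| = 1$ unless $m_\alpha(0)\in\zz$, and your appeal to unspecified ``integrality/content constraints on $m_\alpha$'' does not supply this. In fact no argument can fill it as stated: take $\alpha=\tfrac12$, so $m_\alpha(X)=X-\tfrac12$ and $|m_\alpha(0)|=\tfrac12\neq 1$, yet $1=\tfrac12+\tfrac12$ shows $1\notin\mathcal{A}(\nn_0[\tfrac12])$ and $\nn_0[\tfrac12]$ is antimatter; here $f(X)=1-2X$, $g(X)=-2$, and indeed $m_\alpha(0)g(0)=1$ with $|m_\alpha(0)|\neq1$. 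The paper's proof glosses over the very same step. The argument goes through cleanly if one assumes $m_\alpha(X)\in\zz[X]$ (i.e.\ $\alpha$ is an algebraic integer), or if the hypothesis is read as $|(\ell m_\alpha)(0)|\neq 1$ for the primitive integer associate $\ell m_\alpha\in\zz[X]$: since $f$ is primitive with $f(0)=1$, Gauss then gives $f=\pm(\ell m_\alpha)G$ with $G\in\zz[X]$ primitive, and evaluating at $0$ forces $|(\ell m_\alpha)(0)|=1$.
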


\begin{proof}
	(1) Suppose, by way of contradiction, that $\nn_0[\alpha]$ is not atomic. It follows from Theorem~\ref{thm:atomic characterization} that $1 \notin \mathcal{A}(\nn_0[\alpha])$, and so there are $c_1, \dots, c_n \in \nn_0$ such that $1 = \sum_{i=1}^n c_i \alpha^i$. Then $\alpha$ is a root of the polynomial $f(X) := 1 - \sum_{i=1}^n c_iX^i \in \qq[X]$. As a result, we can write $f(X) = m_\alpha(X)g(X)$ for some $g(X) \in \qq[X]$, and it follows from Gauss' Lemma that $g(X) \in \zz[X]$. As $f(0) = 1$, one obtains that $|m_\alpha(0)| = 1$, which is a contradiction.
	\smallskip
	
	(2) Once again, assume towards a contradiction that the monoid $\nn_0[\alpha]$ is not atomic. As in the previous paragraph, we can write $1 = \sum_{i=1}^n c_i \alpha^i$ for some $c_1, \dots, c_n \in \nn_0$ and obtain a polynomial $f(X) = 1 - \sum_{i=1}^n c_iX^i$ having $\alpha$ as a root. Now Descartes' Rule of Signs guarantees that $\alpha$ is the only positive root of $f(X)$. Since $m_\alpha(X)$ divides $f(X)$ in $\qq[X]$, the roots of $m_\alpha(X)$ are also roots of $f(X)$. As a consequence, the only positive root of $m_\alpha(X)$ is $\alpha$, which is a contradiction.
\end{proof}

None of the sufficient conditions for atomicity offered in Proposition~\ref{prop:atomicity sufficient conditions} implies the other one. In addition, there are algebraic monoid valuations that are atomic and yet do not satisfy any of these two conditions. This is illustrated in the following examples.

\begin{example} \hfill
	\begin{enumerate}
		\item The polynomial $m(X) = X^2 - 4X + 1$ is irreducible and has two distinct positive roots, namely, $2 \pm \sqrt{3}$. In particular, it is the minimal polynomial of the positive non-rational number $2 + \sqrt{3}$. However, $|m(0)| = 1$.
		\smallskip
		
		\item Now consider the positive algebraic number $\alpha = \sqrt{3} - 1$. Since the minimal polynomial of $\alpha$ is $m_\alpha(X) = X^2 + 2X - 2$, the condition $|m_\alpha(0)| \neq 1$ holds. However, one can verify that $\alpha$ is the only positive root of $m_\alpha(X)$.
		\smallskip
		
		\item Let us find now an atomic algebraic valuation monoid that does not satisfy any of the sufficient conditions in Proposition~\ref{prop:atomicity sufficient conditions}. Consider the golden number $\alpha = \frac{1 + \sqrt{5}}2$, whose minimal polynomial is $m_\alpha(X) = X^2 - X - 1$. Since $\alpha > 1$, we see that $0$ is not a limit point of $\nn_0[\alpha] \setminus \{0\}$, and so $\nn_0[\alpha]$ is a BFM by \cite[Proposition ~4.5]{fG19}. In particular, $\nn_0[\alpha]$ is atomic. However, $|m_\alpha(0)| = 1$ and the other root of $m_\alpha(X)$, namely $\frac{1 -\sqrt{5}}2$, is negative.
	\end{enumerate}
\end{example}

\smallskip
\subsection{The ACCP}

A relevant class of atomic monoids is that of monoids satisfying the ACCP. In his study of Bezout rings, Cohn~\cite[Proposition~1.1]{pC68} asserted without giving a proof that the underlying multiplicative monoid of any integral domain satisfies the ACCP provided that it is atomic. As mentioned in the introduction, this was refuted in 1964 by Grams, who constructed in~\cite{aG74} a neat counterexample. As we shall see in this subsection, there are monoid valuations of $\nn_0[X]$ that are atomic but do not satisfy the ACCP. We proceed to offer a necessary condition for an algebraic monoid valuation of $\nn_0[X]$ to satisfy the ACCP.

\begin{theorem} \label{thm:ACCP necessary condition}
	Let $\alpha \in (0,1)$ be an algebraic number with minimal pair $(p(X),q(X))$. If $\nn_0[\alpha]$ satisfies the ACCP, then $p(X) - X^k q(X) \notin \nn_0[X]$ for any $k \in \nn_0$.
\end{theorem}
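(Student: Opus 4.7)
My plan is to argue by contradiction: suppose that for some $k \in \nn_0$ one has $r(X) := p(X) - X^k q(X) \in \nn_0[X]$, and aim to produce a strictly ascending chain of principal ideals of $\nn_0[\alpha]$.

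I would first dispatch the case $k = 0$ separately, since it does not even require the ACCP hypothesis. In that case, the assumption becomes $p(X) - q(X) = \ell m_\alpha(X) \in \nn_0[X]$, forcing $m_\alpha$ to have only nonnegative coefficients. As $m_\alpha$ is monic and $\alpha > 0$, this gives $m_\alpha(\alpha) > 0$, contradicting $m_\alpha(\alpha) = 0$. (The same reasoning shows, once and for all, that $q \ne 0$ in the minimal pair, so $q(\alpha) > 0$.)

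For $k \ge 1$ I would exploit the identity $p(\alpha) = q(\alpha)$, which is immediate from $\ell m_\alpha(X) = p(X) - q(X)$ and $m_\alpha(\alpha) = 0$. Evaluating $p(X) = X^k q(X) + r(X)$ at $\alpha$ gives $q(\alpha) = \alpha^k q(\alpha) + r(\alpha)$, and multiplying by $\alpha^{jk}$ produces, for every $j \in \nn_0$,
$$\alpha^{jk} q(\alpha) = \alpha^{(j+1)k} q(\alpha) + \alpha^{jk} r(\alpha).$$
Since $\alpha^{jk} r(\alpha) \in \nn_0[\alpha]$, this shows that $\alpha^{(j+1)k} q(\alpha)$ divides $\alpha^{jk} q(\alpha)$ in $\nn_0[\alpha]$, yielding the ascending chain
$$q(\alpha) + \nn_0[\alpha] \subseteq \alpha^k q(\alpha) + \nn_0[\alpha] \subseteq \alpha^{2k} q(\alpha) + \nn_0[\alpha] \subseteq \cdots$$
of principal ideals.

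The last step is to verify that this chain never stabilizes. The monoid $\nn_0[\alpha]$ is reduced (its elements are nonnegative reals), so two generators yield the same principal ideal only if they are equal in $\rr$. Since $\alpha^k \in (0,1)$ and $q(\alpha) > 0$, the real sequence $\bigl(\alpha^{jk} q(\alpha)\bigr)_{j \ge 0}$ is strictly decreasing, making the ideals above pairwise distinct and contradicting the ACCP. The conceptual point, and the reason the hypothesis is stated in precisely this form, is that $p(X) - X^k q(X) \in \nn_0[X]$ is exactly the condition that promotes the identity $p(\alpha) = q(\alpha)$ — which a priori holds only inside $\gp(\nn_0[\alpha])$ — into a genuine divisibility relation inside the monoid $\nn_0[\alpha]$ itself.
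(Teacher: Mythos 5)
Your proof is correct and follows essentially the same route as the paper's: both derive from the identity $q(\alpha)\alpha^{jk} = q(\alpha)\alpha^{(j+1)k} + r(\alpha)\alpha^{jk}$ the ascending chain of principal ideals generated by the $q(\alpha)\alpha^{jk}$, and both conclude that the chain cannot stabilize because these generators form a strictly decreasing sequence of positive reals in the reduced monoid $\nn_0[\alpha]$. Your explicit disposal of the cases $k=0$ and $q(X)=0$ (which the paper leaves implicit, and which are needed for ``strictly decreasing to zero'' to make sense) is a minor tidiness bonus rather than a genuinely different argument.
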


\begin{proof}
	Suppose that $\nn_0[\alpha]$ satisfies the ACCP, and assume towards a contradiction that there is a $k \in \nn_0$ such that $f(X) := p(X) - X^kq(X) \in \nn_0[X]$. Consider the sequence $\big( q(\alpha)\alpha^{nk} + \nn_0[\alpha] \big)_{n \in \nn}$ of principal ideals of $\nn_0[\alpha]$. Observe now that for every $n \in \nn$,
	\[
		q(\alpha)\alpha^{nk} = p(\alpha) \alpha^{nk} = \big( f(\alpha) + \alpha^k q(\alpha) \big) \alpha^{nk} = f(\alpha) \alpha^{nk} + q(\alpha) \alpha^{(n+1)k}.
	\]
	As a result, $q(\alpha)\alpha^{nk} \in q(\alpha) \alpha^{(n+1)k} + \nn_0[\alpha]$ for every $n \in \nn$, which means that the sequence of principal ideals $\big( q(\alpha) \alpha^{nk} + \nn_0[\alpha] \big)_{n \in \nn}$ is ascending. On the other hand, since $q(\alpha)\alpha^{nk}$ is the minimum of $q(\alpha)\alpha^{nk} + \nn_0[\alpha]$ for every $n \in \nn$ and the sequence $(q(\alpha) \alpha^{nk})_{n \in \nn}$ decreases to zero, the chain of ideals $\big( q(\alpha) \alpha^{nk} + \nn_0[\alpha] \big)_{n \in \nn}$ does not stabilize. This contradicts that $\nn_0[\alpha]$ satisfies the ACCP.
\end{proof}

From Theorem~\ref{thm:ACCP necessary condition}, we can easily deduce \cite[Corollary~4.4]{CGG20a}.

\begin{cor} \label{cor:rational cyclic semiring that are not ACCP}
	For each $q \in \qq \cap (0,1)$ with $\mathsf{n}(q) \ge 2$, the monoid $\nn_0[q]$ is atomic but does not satisfy the ACCP.
\end{cor}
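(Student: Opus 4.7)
The plan is to derive both assertions as direct applications of Theorem~\ref{thm:atomic characterization} and Theorem~\ref{thm:ACCP necessary condition}, after first identifying the minimal pair of $q$ explicitly. Write $q = n/d$ with $n := \mathsf{n}(q)$ and $d := \mathsf{d}(q)$; the hypothesis $q \in (0,1)$ together with $n \ge 2$ yields $2 \le n < d$ and $\gcd(n,d) = 1$. The minimal polynomial $m_q(X) = X - n/d$ becomes primitive over $\zz$ after multiplication by $d$, producing $dX - n$. Hence the minimal pair of $q$ is $(dX,\,n)$, and its two components have disjoint support, as required.

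For the failure of the ACCP, my approach is to apply the contrapositive of Theorem~\ref{thm:ACCP necessary condition} by exhibiting a single $k \in \nn_0$ for which the relevant polynomial lies in $\nn_0[X]$. Taking $k = 1$, I compute $dX - X \cdot n = (d-n)X$, which belongs to $\nn_0[X]$ precisely because $d > n$. Theorem~\ref{thm:ACCP necessary condition} then immediately forces $\nn_0[q]$ to violate the ACCP.

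For atomicity, I would invoke the equivalence (a)$\Leftrightarrow$(c) of Theorem~\ref{thm:atomic characterization}, so it suffices to show $1 \in \mathcal{A}(\nn_0[q])$. If this were to fail, one could write $1 = \sum_{i=1}^{k} c_i q^i$ for some $c_1,\dots,c_k \in \nn_0$, not all zero. Clearing denominators by multiplying through by $d^k$ yields $d^k = \sum_{i=1}^{k} c_i\, n^i\, d^{k-i}$, so that $n \mid d^k$. Since $\gcd(n,d) = 1$, this compels $n = 1$, contradicting $n \ge 2$.

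The corollary then follows by combining the two halves. There is no substantive obstacle here; the argument is essentially a bookkeeping exercise once the two section theorems are available. The only genuine item to check is that the minimal pair of a reduced rational in $(0,1)$ has the very simple form $(dX,\, n)$, which makes the single choice $k = 1$ produce the trivial witness $(d-n)X \in \nn_0[X]$ needed for the ACCP failure, while the atomicity half reduces to a one-line coprimality argument.
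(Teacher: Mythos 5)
Your proof is correct and follows essentially the same route as the paper: the paper's proof also identifies the minimal pair of $q$ as $(dX,\, n)$ and applies Theorem~\ref{thm:ACCP necessary condition} with $k=1$ to get $(d-n)X \in \nn_0[X]$, and it also derives atomicity from Theorem~\ref{thm:atomic characterization}. The only difference is that the paper cites Theorem~\ref{thm:atomic characterization} without detail, whereas you supply the explicit verification that $1 \in \mathcal{A}(\nn_0[q])$ via the coprimality argument $n \mid d^k$ --- a worthwhile addition, since the general sufficient condition $|m_\alpha(0)| \neq 1$ alone would not distinguish the case $\mathsf{n}(q) \ge 2$ from $\mathsf{n}(q) = 1$.
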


\begin{proof}
	The monoid $\nn_0[q]$ is atomic by Theorem~\ref{thm:atomic characterization}. On the other hand, taking $k=1$ in Theorem~\ref{thm:ACCP necessary condition}, we can see that $\nn_0[q]$ does not satisfy the ACCP.
\end{proof}

As the following example illustrates, the necessary condition in Theorem~\ref{thm:ACCP necessary condition} is not sufficient to guarantee that an atomic algebraic monoid valuation satisfies the ACCP.

\begin{example}
	The polynomial $X^3 + X^2 + X - 2$ is strictly increasing in $\rr_{\ge 0}$ and, thus, it has exactly one positive root, namely, $\alpha$. Since $X^3 + X^2 + X - 2$ is irreducible, it is indeed the minimal polynomial $m_\alpha(X)$ of $\alpha$, and so the minimal pair of $\alpha$ is $(p(X), q(X)) = (X^3 + X^2 + X, 2)$. Since $|m_\alpha(0)| \neq 1$, Proposition~\ref{prop:atomicity sufficient conditions} guarantees that $\nn_0[\alpha]$ is atomic. In addition, it is clear that $p(X) - X^k q(X) \notin \nn_0[X]$ for any $k \in \nn_0$, which is the necessary condition of Theorem~\ref{thm:ACCP necessary condition}.
	
	We proceed to verify that $\nn_0[\alpha]$ does not satisfy the ACCP. To do this, for every $n \in \nn$ set $x_n = \alpha^{n+2} + 2\alpha^{n+1} + 3\alpha^n \in \nn_0[\alpha]$ and consider the sequence $(x_n + \nn_0[\alpha])_{n \in \nn}$ of principal ideals of $\nn_0[\alpha]$. For every $n \in \nn$, the equality $2\alpha^n = \alpha^{n+3} + \alpha^{n+2} + \alpha^{n+1}$ holds and, therefore,
	\[
		x_n = \big( \alpha^{n+2} + 2 \alpha^{n+1} + \alpha^n \big) + 2\alpha^n = \big( \alpha^{n+2} + 2 \alpha^{n+1} + \alpha^n \big) + \big( \alpha^{n+3} + \alpha^{n+2} + \alpha^{n+1} \big) = x_{n+1} + \alpha^n.
	\]
	As a result, $x_n + \nn_0[\alpha] \subseteq x_{n+1} + \nn_0[\alpha]$ for every $n \in \nn$, which means that $(x_n + \nn_0[\alpha])_{n \in \nn}$ is an ascending chain of principal ideals of $\nn_0[\alpha]$. As in the proof of Theorem~\ref{thm:ACCP necessary condition}, one can readily see that the chain of ideals $(x_n + \nn_0[\alpha])_{n \in \nn}$ does not stabilize. Hence $\nn_0[\alpha]$ does not satisfy the ACCP.
\end{example}

As an application of Theorem~\ref{thm:ACCP necessary condition}, we conclude this section providing, for each $d \in \nn$, an infinite class of atomic monoid valuations of $\nn_0[X]$ of rank $d$ that does not satisfy the ACCP.

\begin{prop} \label{prop:infinitely many atomic CARS without the ACCP}
	For each $d \in \nn$, there exist infinitely many non-isomorphic semiring valuations of $\nn_0[X]$ whose additive monoids have rank~$d$, are atomic, but do not satisfy the ACCP.
\end{prop}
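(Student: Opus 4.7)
The plan is to exhibit, for each $d \in \nn$, an explicit infinite parametric family of algebraic monoid valuations of rank $d$ that are atomic but fail the ACCP. When $d = 1$, this is immediate from the preceding corollary: for each integer $k \geq 1$, the rational number $q_k := 2/(2k+1) \in \qq \cap (0,1)$ has numerator $\mathsf{n}(q_k) = 2$, so the corollary yields that $\nn_0[q_k]$ is atomic and violates the ACCP, and the valuations $S_{q_k}$ are pairwise non-isomorphic by Proposition~\ref{prop:isomorphism classes of CARS}(2). Assume now $d \geq 2$, and for each prime $p$ consider
$$g_p(X) := X^d + pX - p \in \zz[X].$$
I would first verify that $g_p$ is irreducible via Eisenstein at $p$: the leading coefficient $1$ is not divisible by $p$, every intermediate coefficient is either $0$ or $p$, the constant term is $-p$, and $p^2 \nmid -p$. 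Since $g_p(0) = -p < 0 < 1 = g_p(1)$, there is a positive real root $\alpha_p \in (0,1)$ whose minimal polynomial is $g_p$ and whose degree is exactly $d$. Propositions~\ref{prop:rank of CARs}(2) and~\ref{prop:atomicity sufficient conditions}(1) then give $\rank \, \nn_0[\alpha_p] = d$ and the atomicity of $\nn_0[\alpha_p]$ (using $|g_p(0)| = p \neq 1$). Distinct primes yield distinct irreducible minimal polynomials and hence non-conjugate roots, so Proposition~\ref{prop:isomorphism classes of CARS}(3) implies that the semiring valuations $S_{\alpha_p}$ are pairwise non-isomorphic.

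The core of the argument, and the step requiring the most care, is constructing an ascending chain of principal ideals in $\nn_0[\alpha_p]$ that does not stabilize. Mimicking the cubic example immediately preceding the statement, set
$$x_n := (p+1)\alpha_p^n + \sum_{i=1}^{d-1} \alpha_p^{n+i} \in \nn_0[\alpha_p] \qquad (n \in \nn).$$
Multiplying the identity $\alpha_p^d + p\alpha_p - p = 0$ by $\alpha_p^n$ yields $p\alpha_p^n = \alpha_p^{n+d} + p\alpha_p^{n+1}$, and a short telescoping computation then gives
$$x_n - x_{n+1} = (p+1)\alpha_p^n - p\alpha_p^{n+1} - \alpha_p^{n+d} = (p+1)\alpha_p^n - p\alpha_p^n = \alpha_p^n.$$
Hence $x_n \in x_{n+1} + \nn_0[\alpha_p]$, so $(x_n + \nn_0[\alpha_p])_{n \in \nn}$ is an ascending chain of principal ideals. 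As in the preceding example, this chain cannot stabilize: because $\nn_0[\alpha_p] \subseteq \rr_{\geq 0}$ is reduced, the element $x_n$ is the minimum of $x_n + \nn_0[\alpha_p]$ as a real number, yet $x_n \to 0$ since $\alpha_p \in (0,1)$. The only real obstacle is identifying the correct coefficient vector $(p+1, 1, 1, \ldots, 1)$, which one can derive by solving the recurrence forced by $g_p(\alpha_p) = 0$ together with the desired telescoping relation $x_n - x_{n+1} = \alpha_p^n$; once the family $g_p$ is in place, the remaining verifications are routine.
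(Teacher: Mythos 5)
Your proposal is correct and follows essentially the same strategy as the paper's proof: an explicit Eisenstein-irreducible family of degree-$d$ polynomials with a positive root, atomicity via Proposition~\ref{prop:atomicity sufficient conditions}(1), rank via Proposition~\ref{prop:rank of CARs}, pairwise non-isomorphism via Proposition~\ref{prop:isomorphism classes of CARS}, and an ascending chain of principal ideals whose generators strictly decrease to zero. The only inessential differences are the choice of family ($X^d + pX - p$ versus the paper's $X^d - q$ with $q \in \qq \cap (0,1)$ of squarefree numerator, which also handles $d=1$ uniformly) and that you build the non-stabilizing chain by hand, whereas the minimal pair of your $\alpha_p$ is $(X^d + pX,\, p)$, so $p(X) - X^1 q(X) = X^d \in \nn_0[X]$ and Theorem~\ref{thm:ACCP necessary condition} yields the failure of the ACCP directly.
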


\begin{proof}
	Fix $d \in \nn$. Take $q \in \qq \cap (0,1)$ such that $\mathsf{n}(q) \ge 2$ is a squarefree integer, and consider the polynomial $m(X) = X^d - q \in \qq[X]$. The polynomial $m(X)$ is irreducible (by Eisenstein's Criterion) and has a root $\alpha_q$ in the interval $(0,1)$. Observe that the algebraic monoid valuation $\nn_0[\alpha_q]$ has rank~$d$ by Proposition~\ref{prop:rank of CARs}. If $d=1$, then $\nn_0[\alpha_q] = \langle q^n : n \in \nn_0 \rangle$, and so it is atomic by \cite[Theorem~6.2]{GG18}. If $d > 1$, then $\alpha_q$ is not rational, and so $\nn_0[\alpha_q]$ is atomic by part~(1) of Proposition~\ref{prop:atomicity sufficient conditions}. On the other hand, it follows from Theorem~\ref{thm:ACCP necessary condition} that $\nn_0[\alpha_q]$ does not satisfy the ACCP. Varying the parameter~$q$, one can obtain an infinite class of semirings $\nn_0[\alpha_q]$ with atomic additive monoids of rank $d$ that do not satisfy the ACCP. Finally, note that the semiring valuations of $\nn_0[X]$ in this class are pairwise non-isomorphic by Proposition~\ref{prop:isomorphism classes of CARS}.
\end{proof}

\smallskip
\subsection{The Bounded and Finite Factorization Properties}

The primary purpose of this subsection is to prove that, for monoid valuations of $\nn_0[X]$, the finite factorization property (and so the bounded factorization property) is equivalent to the ACCP.

\begin{theorem} \label{thm:BFM/FFM equivalence}
	For an algebraic number $\alpha \in \rr_{> 0}$, the following statements are equivalent.
	\begin{enumerate}
		\item[(a)] $\nn_0[\alpha]$ is an FFM.
		\smallskip
		
		\item[(b)] $\nn_0[\alpha]$ is a BFM.
		\smallskip
		
		\item[(c)] $\nn_0[\alpha]$ satisfies the ACCP.
	\end{enumerate}
\end{theorem}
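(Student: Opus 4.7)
The implications $(a)\Rightarrow(b)\Rightarrow(c)$ hold in every cancellative commutative monoid, as recalled in Section~\ref{sec:background}, so the substance of the theorem is the implication $(c)\Rightarrow(a)$. I plan to prove it by a case analysis on $\alpha$, disposing of the ``easy'' regimes with direct counting arguments and reducing everything of substance to the case $\alpha\in(0,1)$ algebraic.

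When $\alpha$ is transcendental, Lemma~\ref{lem:negative generators}(1) identifies $\nn_0[\alpha]$ with a free commutative monoid, which is automatically an FFM. When $\alpha\ge 1$, the set $\nn_0[\alpha]\cap[0,N]$ is finite for every $N>0$: this is clear for $\alpha=1$, and for $\alpha>1$ an element $\sum c_i\alpha^i\le N$ forces $c_i\le N/\alpha^i$ and $i\le \log_\alpha N$, leaving only finitely many options. Consequently, every factorization of a fixed $x\in\nn_0[\alpha]$ has degree at most $\log_\alpha x$ and coefficients bounded by $x$, so $|\mathsf{Z}(x)|<\infty$. The ACCP hypothesis is not used in either of these two subcases.

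The essential case is $\alpha\in(0,1)$ algebraic, which I plan to treat by proving the contrapositive: if $|\mathsf{Z}(x)|=\infty$ for some $x\in\nn_0[\alpha]$, then $\nn_0[\alpha]$ does not satisfy the ACCP. A first observation, parallel to the case $\alpha>1$, is that for each fixed $D\in\nn_0$ the set $\{z\in\mathsf{Z}(x):\deg z\le D\}$ is finite, because its elements have coefficients bounded above by $x/\alpha^D$. Hence the assumed infinitude of $\mathsf{Z}(x)$ produces a sequence $(z_n)\subseteq\mathsf{Z}(x)$ with $d_n:=\deg z_n$ strictly increasing to infinity, and in particular the atom $\alpha^{d_n}$ divides $x$ in $\nn_0[\alpha]$ for every $n$.

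The main obstacle is then to convert this supply of large-degree divisors of $x$ into a genuinely non-stabilizing \emph{ascending} chain of principal ideals of $\nn_0[\alpha]$. The naive choice $y_n:=x-\alpha^{d_n}$ is not directly useful: since $\alpha^{d_n}\to 0$, the elements $y_n$ converge upward to $x$, so the principal ideals $y_n+\nn_0[\alpha]$ sit in the wrong direction relative to the ACCP. To overcome this I plan to exploit the algebraic relation $p_\alpha(\alpha)=q_\alpha(\alpha)$ coming from the minimal pair $(p_\alpha,q_\alpha)$ of $\alpha$: within any factorization, a sub-polynomial of the form $X^k q_\alpha(X)$ may be exchanged for $X^k p_\alpha(X)$ (and vice versa) without changing the value at $\alpha$, and such a swap strictly changes the degree by $\deg m_\alpha$. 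Iterating these substitutions, and combining them with a passage from $x$ to a judiciously chosen divisor $x'\mid_{\nn_0[\alpha]} x$ which still has infinitely many factorizations, should produce a strictly decreasing sequence $x=x_0>x_1>x_2>\cdots$ in $\nn_0[\alpha]$ with $x_{k+1}\mid_{\nn_0[\alpha]} x_k$ for every $k$, yielding the desired non-stabilizing ascending chain of principal ideals and contradicting the ACCP. This construction is in the same spirit as the explicit chain built in Theorem~\ref{thm:ACCP necessary condition} and the example that follows it.
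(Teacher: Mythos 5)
Your reductions are fine: (a)$\Rightarrow$(b)$\Rightarrow$(c) is standard, the transcendental case is free, and the counting argument for $\alpha\ge 1$ is correct (it replaces the paper's appeal to the fact that increasingly generated positive monoids are FFMs). You have also correctly located the difficulty in the case of algebraic $\alpha\in(0,1)$ and correctly observed that subtracting atoms from the \emph{fixed} element $x$ runs the chain in the wrong direction. But the proof stops exactly there: the final paragraph is a plan, not an argument, and the step it leaves open is the entire content of the theorem. The $p_\alpha/q_\alpha$ swap you propose transforms a factorization of $x$ into another factorization of the \emph{same} element, so by itself it never produces a new element $x'$ with $x'\mid_{\nn_0[\alpha]}x$ and $x'<x$; and the ``judiciously chosen divisor $x'$ which still has infinitely many factorizations'' is precisely what must be constructed. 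The paper's route is to prove that if $|\mathsf{Z}(x)|=\infty$ then some single atom $\alpha^n$ occurs in infinitely many factorizations of $x$, so that $x-\alpha^n$ again has infinitely many factorizations and the subtraction can be iterated to give a strictly ascending, non-stabilizing chain $(x_j+\nn_0[\alpha])$. Proving that such an atom exists is nontrivial: it uses Descartes' Rule of Signs and only works when $m_\alpha(X)$ has more than one positive root. When $\alpha$ is the only positive root of $m_\alpha(X)$, that argument fails and the paper abandons the element $x$ altogether, instead building an explicit non-stabilizing chain from a Curtiss multiplier $\phi(X)$ with $\phi(X)m_\alpha(X)$ having exactly one sign variation, multiplied by $1+X+\cdots+X^n$. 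Your sketch engages with neither branch of this dichotomy.

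Two further cautions. First, you cannot hope to sidestep the ``not FFM'' hypothesis by showing that the ACCP always fails for algebraic $\alpha\in(0,1)$: for $\alpha=2-\sqrt{3}$, Lemma~\ref{lem:negative generators}(3) gives $\nn_0[\alpha]\cong\nn_0[2+\sqrt 3]$, which is an FFM, so the infinite set $\mathsf{Z}(x)$ really is the only handle available and the missing conversion step is unavoidable. Second, the kinship you note with Theorem~\ref{thm:ACCP necessary condition} is real but insufficient: the example following that theorem shows that the simple substitution $X^kq_\alpha(X)\mapsto X^kp_\alpha(X)$ need not be available, and the general replacement for it is exactly the Curtiss-multiplier construction you have not supplied.
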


\begin{proof}
	(a) $\Rightarrow$ (b) $\Rightarrow$ (c): Each FFM is clearly a BFM, and each BFM satisfies the ACCP by \cite[Corollary~1.3.3]{GH06}.
	\smallskip
	
	(c) $\Rightarrow$ (a): Suppose that $\nn_0[\alpha]$ is not an FFM. If $\nn_0[\alpha]$ is not atomic, then it cannot satisfy the ACCP, and we are done. Assume, therefore, that $\nn_0[\alpha]$ is atomic. We let $m_\alpha(X)$ denote the minimal polynomial of~$\alpha$. We consider the following two cases.
	\smallskip
	
	\noindent \emph{Case 1:} $m_\alpha(X)$ has more than one positive root (counting repetitions). Since $\nn_0[\alpha]$ is not an FFM, it follows from  \cite[Proposition~2.7.8]{GH06} that $\nn_0[\alpha]$ is not an FGM. This, along with Theorem~\ref{thm:atomic characterization}, ensures that $\mathcal{A}(\nn_0[\alpha]) = \{\alpha^n : n \in \nn_0\}$. For $x \in \nn_0[\alpha]$ and $n \in \nn_0$, set
	\[
		\mathsf{Z}_n(x) := \{z \in \mathsf{Z}(x) : n \in \supp \, z(X)\}.
	\]
	
	\noindent {\it Claim.} For each $x \in \nn_0[\alpha]$, if $|\mathsf{Z}_n(x)| < \infty$ for every $n \in \nn_0$, then $|\mathsf{Z}(x)| < \infty$.
	\smallskip
	
	\noindent {\it Proof of Claim.} Take a nonzero element $x \in \nn_0[\alpha]$ such that $\mathsf{Z}_n(x)$ is a finite set for every $n \in \nn_0$. Suppose, by way of contradiction, that $|\mathsf{Z}(x)| = \infty$. Fix $z_0 \in \mathsf{Z}(x)$, and set $d = \deg z_0(X)$. As~$\mathsf{Z}_n(x)$ is finite for every $n \in \ldb 0,d \rdb$, there is a factorization $z \in \mathsf{Z}(x)$ such that $\min \, \supp \, z(X) > d$. Consider the polynomial $z(X) - z_0(X) \in \zz[X]$. Since $z$ and $z_0$ are factorizations of the same element, $\alpha$ is a root of $z(X) - z_0(X)$. In addition, because $\min \supp \, z(X) > d = \deg z_0(X)$, Descartes' Rule of Signs guarantees that $\alpha$ is indeed the only positive root of $z(X) - z_0(X)$. However, the fact that $m_\alpha(X)$ divides $z(X) - z_0(X)$ contradicts that $m_\alpha(X)$ has more than one positive root. As a result, the claim follows.
	\smallskip
	
	Because $\nn_0[\alpha]$ is not an FFM, there is an $x_0 \in \nn_0[\alpha]$ such that $|\mathsf{Z}(x_0)| = \infty$. By the established claim, we can choose an $n_1 \in \nn$ so that $\mathsf{Z}_{n_1}(x_0)$ is an infinite set. Therefore $x_1 = x_0 - \alpha^{n_1} \in \nn_0[\alpha]$ satisfies $|\mathsf{Z}(x_1)| = \infty$. Now suppose that for some $j \in \nn$ we have found $x_0, \dots, x_j \in \nn_0[\alpha]$ such that $|\mathsf{Z}(x_j)| = \infty$ and $x_{i-1} - x_i \in \nn_0[\alpha] \setminus \{0\}$ for every $i \in \ldb 1,j \rdb$. Because $|\mathsf{Z}(x_j)| = \infty$, the previous claim guarantees the existence of an $n_{j+1} \in \nn_0$ such that the set $\mathsf{Z}_{n_{j+1}}(x_j)$ is infinite. Then after setting $x_{j+1} = x_j - \alpha^{n_{j+1}}$, one finds that $|\mathsf{Z}(x_{j+1})| = \infty$. Thus, we have constructed a sequence $(x_n)_{n \in \nn_0}$ of elements in $\nn_0[\alpha]$ satisfying $x_{n-1} - x_n \in \nn_0[\alpha] \setminus \{0\}$ for every $n \in \nn$. This implies that $(x_n + \nn_0[\alpha])_{n \in \nn_0}$ is an ascending chain of principal ideals of $\nn_0[\alpha]$ that does not stabilize. Hence $\nn_0[\alpha]$ does not satisfy the ACCP.
	\smallskip
	
	\noindent \emph{Case 2:} $\alpha$ is the only positive root of $m_\alpha(X)$ (counting repetitions). As $\nn_0[\alpha]$ is not an FFM, it follows from \cite[Theorem~5.6]{fG19} that it cannot be increasingly generated. Thus, $\alpha \in (0,1)$. Since $m_\alpha(X)$ has only one positive root, Theorem~\ref{thm:Curtiss' multipliers} ensures the existence of $\phi(X) \in \nn_0[X]$ such that $\phi(X) m_\alpha(X)$ belongs to $\zz[X]$ and has exactly one variation of sign. Set $f(X) = \sum_{i=0}^s c_i X^i =  \phi(X) m_\alpha(X)$, where $c_0, \dots, c_s \in \zz$. We can assume, without loss of generality, that $\phi(1) > 0$. As $f(1) = \phi(1)m_\alpha(1) \ge 1$, there is a $k \in \ldb 0, s-1 \rdb$ such that $\sum_{i=0}^{k+1} c_i \ge 0$ and $\sum_{i=0}^j c_i < 0$ for every $j \in \ldb 0,k \rdb$. After setting $\textbf{1}_n(X) = \sum_{i=0}^n X^i$ for every $n \in \nn_{> s}$, one obtains that
	\[
		f(X)\textbf{1}_n(X) = \sum_{j=0}^k \bigg( \sum_{i=0}^j c_i \bigg) X^j + \sum_{j=k+1}^{s-1} \bigg( \sum_{i=0}^j c_i \bigg) X^j + \sum_{j=s}^n f(1)X^j + \sum_{j=n+1}^{n+s} \bigg( \sum_{i=j-n}^s c_i\bigg) X^j.
	\]
	Observe that the negative coefficients of $f(X) \textbf{1}_n(X)$ are precisely the coefficients of the terms of degree at most $k$. Since $f(\alpha) \textbf{1}_n(\alpha) = 0$,
	\begin{equation} \label{eq:generators of the principal ideals}
		x_n := \sum_{j=0}^k \bigg( \! \! -\sum_{i=0}^j c_i \bigg) \alpha^j - \sum_{j=s}^n f(1)\alpha^j = \sum_{j=k+1}^{s-1} \bigg( \sum_{i=0}^j c_i \bigg) \alpha^j + \sum_{j=n+1}^{n+s} \bigg( \sum_{i=j-n}^s c_i\bigg) \alpha^j \in \nn_0[\alpha].
	\end{equation}
	Now consider the sequence $(x_n + \nn_0[\alpha])_{n \in \nn_{> s}}$ of principal ideals of $\nn_0[\alpha]$. It follows from~\eqref{eq:generators of the principal ideals} that $x_n - x_{n+1} = f(1)\alpha^{n+1} \in \nn_0[\alpha]$ for every $n > s$. Thus, $(x_n + \nn_0[\alpha])_{n \in \nn_{> s}}$ is an ascending chain of principal ideals of $\nn_0[\alpha]$ that does not stabilize. As a consequence, $\nn_0[\alpha]$ does not satisfy the ACCP.
\end{proof}
\smallskip

It follows from \cite[Proposition~2.7.8]{GH06} that every FGM is an FFM. However, there are algebraic monoid valuations of $\nn_0[X]$ that are FFMs but not FGMs. The following simple example, which will be significantly extended in Proposition~\ref{prop:FFMs that are not f.g.}, illustrates this observation.

\begin{example}
	Take $q \in \qq_{\ge 1} \setminus \nn$, and consider the rational monoid valuation $\nn_0[q]$. Since $\nn_0[q]$ is generated by the increasing sequence $(q^n)_{n \in \nn_0}$, it follows from~\cite[Theorem~5.6]{fG19} that $\nn_0[q]$ is an FFM. On the other hand, $\mathsf{d}(q) > 1$ implies that $q^n \notin \langle q^j : j \in \ldb 0, n-1 \rdb \rangle$ for any $n \in \nn$. Hence Theorem~\ref{thm:atomic characterization} guarantees that $\mathcal{A}(\nn_0[q]) = \{q^n : n \in \nn_0\}$. As a result, $\nn_0[q]$ is not an FGM.
\end{example}
\smallskip

We conclude this section with Figure~\ref{fig:atomic classes of CASs up to FGMs}, which is a visual summary of the results we have established so far.

\begin{figure}[h]
	\begin{tikzcd}[cramped]
		\textbf{FGM} \arrow[r, Rightarrow] \arrow[red, r, Leftarrow, "/"{anchor=center,sloped}, shift left=1.5ex] & \big[ \textbf{FFM} \arrow[r, Leftrightarrow] & \textbf{BFM}  \arrow[r, Leftrightarrow] & \textbf{ACCP monoid} \big] \arrow[r, Rightarrow] \arrow[red, r, Leftarrow, "/"{anchor=center,sloped}, shift left=1.5ex] & \textbf{ATM}
	\end{tikzcd}
	\caption{The non-obvious implications in the diagram are the main results we have established so far on the class of algebraic monoid valuations of $\nn_0[X]$, which include counterexamples justifying the red marked arrows.}
	\label{fig:atomic classes of CASs up to FGMs}
\end{figure}

\bigskip
\section{Factoriality}
\label{sec:factoriality}

We will show in this section that the implication UFM $\Rightarrow$ FGM holds in the class of algebraic monoid valuations of $\nn_0[X]$. Our primary purpose in this section is to study the properties of being half-factorial and length-factorial and extend the implication UFM $\Rightarrow$ FGM to the following chain of implications: UFM $\Leftrightarrow$ HFM $\Rightarrow$ LFM $\Rightarrow$ FGM. This may come as a surprise since, in general, an HFM (resp., an LFM) may not be an FGM even in the class of torsion-free reduced atomic monoids. In addition, in the same class, there are HFMs that are not UFMs. The following examples shed some light upon these observations.

\begin{example}
	Consider the additive submonoid $M$ of $\zz^2$ generated by the set $\{(1,n) : n \in \zz\}$. It is clear that $M$ is a torsion-free reduced atomic monoid with $\mathcal{A}(M) = \{(1,n) : n \in \zz\}$, from which one can deduce that $M$ is an HFM but not a UFM. However, $M$ is not an FGM. Indeed,~$M$ is not even an FFM: for instance, the equalities $(2,0) = (1,-n) + (1,n)$ (for every $n \in \nn$) yield infinitely many factorizations of $(2,0)$ in~$M$.
\end{example}

\begin{example}
	Consider the additive monoid $\nn_0[X]$, which is the free monoid of rank~$\aleph_0$, and then let~$M$ be the submonoid of $\nn_0[X]$ generated by the set $A = \{2, X^j + 2 : j \in \nn_0 \}$. One can readily argue that~$M$ is atomic with $\mathcal{A}(M) = A$. As a consequence,  $M$ is not an FGM. To show that $M$ is an LFM, consider two factorizations
	\[
		z := 2b_0 + \sum_{i=0}^n c_i(X^i + 2) \quad \text{and} \quad z' := 2b'_0 + \sum_{i=0}^n c'_i(X^i + 2)
	\]
	of the same element in $M$ satisfying $|z| = |z'|$, that is, $b_0, b'_0, c_i, c'_i \in \nn_0$ for every $i \in \ldb 0,n \rdb$ and $b_0 + \sum_{i=0}^n c_i = b'_0 + \sum_{i=0}^n c'_i$. Since, for every $j \in \nn$, the atom $X^j + 2$ is the only atom of $M$ whose support, as a polynomial, contains $j$, we can conclude that $X^j + 2$ is a prime in~$M$. As a result, $c_j = c'_j$ for every $j \in \ldb 1, n \rdb$. Thus, the equality $2b_0 + 3 c_0 = 2b'_0 + 3c'_0$ holds because $z$ and $z'$ are factorizations of the same element, and the equality $b_0 + c_0 = b'_0 + c'_0$ holds because $|z| = |z'|$. Therefore $c_0 = c'_0$ and $b_0 = b'_0$, from which we obtain $z = z'$. Hence $M$ is an LFM.
\end{example}

As we mentioned in the introduction, the notion of length-factoriality was introduced and first investigated by Coykendall and Smith~\cite{CS11} in 2011 under the term `other-half-factoriality': they proved that the multiplicative monoid of an integral domain is an LFM if and only if it is a UFM (a shorter proof of the same result was given in~\cite[Theorem~2.3]{AA10}\footnote{Although \cite{AA10} was published before~\cite{CS11}, the first proof that factoriality and length-factoriality are equivalent conditions in the class of integral domains is that given in~\cite{CS11}.}). After that, the study of length-factoriality seemed to be dormant for almost a decade until the second author~\cite{fG20a,fG20}, Chapman et al.~\cite{CCGS21}, and, even more recently, Geroldinger and Zhong~\cite{GZ21} considered length-factoriality in the setting of commutative monoids. It is worth noticing that monoids with elements having multiple factorizations of the same length, which are examples of non-LFMs, were investigated in~\cite{CGLM11} and, more recently, in~\cite{GGM21}. Still, at this point it seems like there is no example in the factorization theory literature of an LFM that is not an FFM. This suggests the following question.

\begin{question} \label{quest:LFM}
	Is every LFM an FFM?
\end{question}

\smallskip
\subsection{Half-Factoriality} 

As mentioned in the introduction, for monoid valuations of $\nn_0[X]$ the property of being an HFM is equivalent to that of being a UFM. Now we prove this assertion and, when the generator is algebraic, we give further characterizations of these equivalent properties.

\begin{theorem} \label{thm:UFM characterization}
	For an algebraic number $\alpha \in \rr_{>0}$, the following statements hold.
	\begin{enumerate}

		\item If $\nn_0[\alpha]$ is a UFM, then it is finitely generated.
		\smallskip
		
		\item If $\alpha$ has algebraic degree $d$, minimal polynomial $m_\alpha(X)$, and minimal pair $(p(X),q(X))$, then the following conditions are equivalent.
		\smallskip
		
		\begin{enumerate}
			\item[(a)] $\nn_0[\alpha]$ is a UFM.
			\smallskip
			
			\item[(b)] $\nn_0[\alpha]$ is an HFM.
			\smallskip
			
			\item[(c)] $\deg m_\alpha(X) = |\mathcal{A}(\nn_0[\alpha])|$.
			\smallskip
			
			\item[(d)] $p(X) = X^d$ for some $d \in \nn$.
		\end{enumerate}	
	\end{enumerate}
\end{theorem}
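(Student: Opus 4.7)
The plan is to prove part (2) first through the cyclic chain $(a)\Rightarrow(b)\Rightarrow(c)\Rightarrow(d)\Rightarrow(a)$, and then derive part (1) from that characterization. The implication $(a)\Rightarrow(b)$ is immediate. For $(d)\Rightarrow(a)$, matching the leading coefficient of $\ell m_\alpha(X) = p(X) - q(X) = X^d - q(X)$ forces $\ell = 1$, so $\alpha^d = q(\alpha)$ with $q \in \nn_0[X]$ of degree less than $d$. Iterating this reduction shows that every element of $\nn_0[\alpha]$ is an $\nn_0$-linear combination of $\{1, \alpha, \dots, \alpha^{d-1}\}$, and these powers are $\qq$-linearly independent because $\alpha$ has degree $d$. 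Consequently, the natural map $\nn_0^d \to \nn_0[\alpha]$ sending $(c_0, \dots, c_{d-1}) \mapsto \sum_{j=0}^{d-1} c_j \alpha^j$ is a monoid isomorphism, making $\nn_0[\alpha]$ a free commutative monoid and, in particular, a UFM. For $(c)\Rightarrow(d)$, if $|\mathcal{A}(\nn_0[\alpha])| = d$, then Theorem~\ref{thm:atomic characterization} forces $\sigma = d$ (the alternative $\sigma = \infty$ would yield infinitely many atoms), so $\alpha^d = \sum_{j < d} c_j \alpha^j$ for some $c_j \in \nn_0$. The resulting monic polynomial $X^d - \sum_{j<d} c_j X^j \in \zz[X]$ vanishes at $\alpha$ and has degree $d$, so by minimality it must coincide with $m_\alpha(X)$, giving $\ell = 1$ and $p(X) = X^d$.

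The main obstacle is $(b)\Rightarrow(c)$. The strategy is to turn the identity $p(\alpha) = q(\alpha)$ into two explicit distinct factorizations of a single element and extract a numerical obstruction from the HFM condition. Suppose $\nn_0[\alpha]$ is an HFM. Minimality of $m_\alpha(X)$ automatically yields $\sigma \ge d$, so it remains to exclude $\sigma > d$ (including the case $\sigma = \infty$). In that regime, $\deg p \le d < \sigma$ and $\deg q \le d < \sigma$, so every exponent appearing in $p(X)$ or $q(X)$ indexes an atom by Theorem~\ref{thm:atomic characterization}. Since $\alpha > 0$ is a root of $m_\alpha(X)$, the monic polynomial $m_\alpha(X)$ must have at least one negative coefficient; combined with the fact that its leading coefficient is positive, both $p(X)$ and $q(X)$ are nonzero. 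Together with the disjointness of their supports, the equality $p(\alpha) = q(\alpha)$ therefore produces two distinct factorizations of the positive element $p(\alpha) \in \nn_0[\alpha]$, of lengths $p(1)$ and $q(1)$. The HFM condition forces these lengths to coincide, so $\ell m_\alpha(1) = p(1) - q(1) = 0$; irreducibility of $m_\alpha$ then gives $m_\alpha(X) = X - 1$, i.e., $\alpha = 1$, whence $d = \sigma = 1$, contradicting $\sigma > d$. Hence $\sigma = d$ and $|\mathcal{A}(\nn_0[\alpha])| = d$.

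Part (1) follows quickly from what has been established. If $\alpha$ is transcendental, Lemma~\ref{lem:negative generators}(1) identifies $\nn_0[\alpha]$ with the free commutative monoid on $\{\alpha^n : n \in \nn_0\}$, which is a UFM that is not finitely generated. Conversely, if $\nn_0[\alpha]$ is a non-finitely generated UFM and $\alpha$ were algebraic, then part (2) would give $|\mathcal{A}(\nn_0[\alpha])| = d < \infty$, so the corollary following Theorem~\ref{thm:atomic characterization} would render $\nn_0[\alpha]$ finitely generated, contradicting the hypothesis.
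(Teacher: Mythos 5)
Your proof is correct, and while the skeleton (exploit the minimal pair to manufacture two factorizations, compare their lengths, and contradict irreducibility of $m_\alpha(X)$) is the same as the paper's, you deviate in three places worth noting. First, in (d)$\Rightarrow$(a) you reduce every element modulo $\alpha^d = q(\alpha)$ and conclude that $\nn_0[\alpha] \cong \nn_0^d$ is a \emph{free} commutative monoid; the paper instead notes $\alpha \ge 1$, cites an external result for atomicity, and compares degrees of factorizations against $\deg m_\alpha(X)$. Your route is slightly stronger (it identifies the monoid outright) and avoids the citation. Second, in (b)$\Rightarrow$(c) the paper splits off the case $\alpha \in \qq$ and invokes a result on half-factorial Puiseux monoids; you handle all cases uniformly by observing that $m_\alpha(1) = 0$ forces $m_\alpha(X) = X-1$, hence $\alpha = 1$ and $\sigma = d = 1$, contradicting $\sigma > d$ --- a self-contained simplification. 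Third, you derive part~(1)'s reverse implication from part~(2) plus the finite-generation corollary of Theorem~\ref{thm:atomic characterization}, whereas the paper argues it directly from $\mathcal{A}(\nn_0[\alpha]) = \{\alpha^n : n \in \nn_0\}$ and the minimal pair; both are fine. The only nitpick: in (c)$\Rightarrow$(d) you should say explicitly that $|\mathcal{A}(\nn_0[\alpha])| = d \ge 1$ makes $\nn_0[\alpha]$ non-antimatter and hence atomic (via Theorem~\ref{thm:atomic characterization}) before applying the description of the atoms in terms of $\sigma$; this is implicit in your argument but deserves a sentence.
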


\begin{proof}
	(1) Suppose that $\nn_0[\alpha]$ is not finitely generated. Let $(p(X),q(X))$ be the minimal pair of $\alpha$. It follows from Theorem~\ref{thm:atomic characterization} that $\mathcal{A}(\nn_0[\alpha]) = \{\alpha^n : n \in \nn_0\}$. As a result, $p(\alpha)$ and $q(\alpha)$ are two distinct factorizations in $\mathsf{Z}(\nn_0[\alpha])$ of the same element, and so $\nn_0[\alpha]$ is not a UFM.
	\smallskip
	
	(2) To argue this part, suppose that $\alpha$ has algebraic degree $d$, minimal polynomial $m_\alpha(X)$, and minimal pair $(p(X),q(X))$.
	
	(a) $\Rightarrow$ (b): This is clear.
	\smallskip
	
	(b) $\Rightarrow$ (c): Suppose that $\nn_0[\alpha]$ is an HFM. If $\alpha \in \qq$, then $\nn_0[\alpha]$ is an additive submonoid of $(\qq_{\ge 0},+)$, and \cite[Proposition~4.2]{fG20} ensures that $\nn_0[\alpha] \cong (\nn_0,+)$. Then $\alpha \in \nn$, from which condition~(c) follows immediately. So assume that $\alpha \notin \qq$ and, therefore, that $\deg m_\alpha(X) > 1$. As in Theorem~\ref{thm:atomic characterization}, set $\sigma = \min \{n \in \nn : \alpha^n \in \langle \alpha^j : j\in \ldb 0, n-1 \rdb \rangle \}$. Since $\nn_0[\alpha]$ is atomic, it follows from Theorem~\ref{thm:atomic characterization} that $\mathcal{A}(\nn_0[\alpha]) = \{\alpha^j : j \in \ldb 0, \sigma-1 \rdb\}$. It is clear that $\deg m_\alpha(X) \le \sigma$. Suppose, by way of contradiction, that $\deg m_\alpha(X) < \sigma$. In this case, both $p(\alpha)$ and $q(\alpha)$ are distinct factorizations in $\nn_0[\alpha]$ of the same element. This, along with the fact that $\nn_0[\alpha]$ is an HFM, implies that $m_\alpha(1) = 0$. However, this contradicts that $m_\alpha(X)$ is an irreducible polynomial in $\qq[X]$ of degree at least $2$. Hence $\deg m_\alpha(X) = \sigma = |\mathcal{A}(\nn_0[\alpha])|$.
	\smallskip
	
	(c) $\Rightarrow$ (d): Because $\mathcal{A}(\nn_0[\alpha])$ is nonempty, $\nn_0[\alpha]$ is an atomic monoid by Theorem~\ref{thm:atomic characterization}. Let $d$ be the degree of $m_\alpha(X)$. Since $\mathcal{A}(\nn_0[\alpha]) = \{\alpha^j : j \in \ldb 0,d-1 \rdb\}$, there are coefficients $c_0, \dots, c_{d-1} \in \nn_0$ such that $\alpha^d = \sum_{i=0}^{d-1} c_i \alpha^i$. Therefore $\alpha$ is a root of the polynomial $X^d - \sum_{i=0}^{d-1} c_i X^i$ and, as a consequence, $m_\alpha(X) = X^d - \sum_{i=0}^{d-1} c_i X^i$ by the uniqueness of the minimal polynomial. Thus, $p(X) = X^d$.
	\smallskip
	
	(d) $\Rightarrow$ (a): Let $\sigma$ be defined as in the proof of (b) $\Rightarrow$ (c) above. Since $p(X) = X^d$, it is clear that $\alpha \ge 1$ and, therefore, the monoid $\nn_0[\alpha]$ is atomic by \cite[Proposition~4.5]{fG19}. On the other hand, the fact that $d$ is the degree of the minimal polynomial of $\alpha$ implies that $\sigma = d$, and so $\mathcal{A}(\nn_0[\alpha]) = \{\alpha^n : n \in \ldb 0, d - 1 \rdb \}$ by Theorem~\ref{thm:atomic characterization}. Take two factorizations $z_1, z_2 \in \mathsf{Z}(\nn_0[\alpha])$ of the same element in $\nn_0[\alpha]$. Then $\max \{ \deg z_1(X), \deg z_2(X) \} < \deg m_\alpha(X)$ and $z_1(\alpha) = z_2(\alpha)$. Since $\deg (z_1 - z_2)(X) < \deg m_\alpha(X)$, the fact that $m_\alpha(X)$ divides $z_1(X) - z_2(X)$ in $\qq[X]$ forces the equality $z_1(X) = z_2(X)$, which implies that $z_1 = z_2$. Hence $\nn_0[\alpha]$ is a UFM, and so Theorem~\ref{thm:atomic characterization} ensures that $\mathcal{A}(\nn_0[\alpha]) = \{\alpha^j : j \in \ldb 0, d-1 \rdb \}$.
\end{proof}

As an immediate consequence of the characterization given in Theorem~\ref{thm:UFM characterization}, we obtain the following result.

\begin{cor} \label{cor:HFM implies FFM in CARS}
	If $\nn_0[\alpha]$ is a UFM (or an HFM) for a positive algebraic $\alpha$, then $\nn_0[\alpha]$ is an FGM, and so an FFM.
\end{cor}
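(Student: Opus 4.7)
The plan is to derive this corollary directly from Theorem~\ref{thm:UFM characterization}, which has already done all the substantive work. Since $\alpha$ is algebraic, we are in the setting of part~(2) of that theorem, where the conditions of being a UFM and being an HFM coincide (via (a) $\Leftrightarrow$ (b)); it therefore suffices to treat the HFM case. The key observation is that condition~(c) of that theorem pins down the cardinality of the atom set in terms of the degree of $m_\alpha(X)$, and this cardinality is finite.

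More precisely, suppose $\nn_0[\alpha]$ is an HFM. The implication (b) $\Rightarrow$ (c) of Theorem~\ref{thm:UFM characterization}(2) yields the equality $|\mathcal{A}(\nn_0[\alpha])| = \deg m_\alpha(X)$. Because $\alpha$ is algebraic, $\deg m_\alpha(X) \in \nn$, and so $\mathcal{A}(\nn_0[\alpha])$ is a finite set. Since every HFM is by definition atomic, $\nn_0[\alpha]$ is generated as a monoid by its finitely many atoms, which means $\nn_0[\alpha]$ is an FGM. The statement that FGM implies FFM is then the standard result \cite[Proposition~2.7.8]{GH06} already cited in the background section.

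There is essentially no obstacle here: this is a clean repackaging of the equivalence already established in Theorem~\ref{thm:UFM characterization}, combined with the elementary fact that an atomic monoid with finitely many atoms is finitely generated. The only mild subtlety is that one must note the hypothesis that $\alpha$ is algebraic is genuinely needed, since part~(1) of Theorem~\ref{thm:UFM characterization} shows that for transcendental $\alpha$ the monoid $\nn_0[\alpha]$ is a UFM that fails to be finitely generated; this justifies the restriction to the algebraic case in the statement.
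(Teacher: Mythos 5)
Your proposal is correct and matches the paper's intent exactly: the paper presents this corollary as an immediate consequence of Theorem~\ref{thm:UFM characterization}, and your argument---using (b) $\Rightarrow$ (c) to get $|\mathcal{A}(\nn_0[\alpha])| = \deg m_\alpha(X) < \infty$, then invoking atomicity to conclude finite generation and \cite[Proposition~2.7.8]{GH06} for the FFM claim---is precisely the reasoning being left implicit. Your remark about why the algebraic hypothesis is needed (via part~(1) of the theorem) is a nice additional observation, also consistent with the paper's surrounding discussion.
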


The converse of Corollary~\ref{cor:HFM implies FFM in CARS} does not hold, that is, there are algebraic monoid valuations of $\nn_0[X]$ that are FFMs but not HFMs. Indeed, we have verified in Example~\ref{ex:two initial examples}(2) that for every $q \in \qq \setminus \zz$ with $q > 1$, the rational monoid valuation $\nn_0[q]$ is an FFM that is not an HFM. In the direction of Example~\ref{ex:two initial examples}(2), we will construct in Proposition~\ref{prop:FFMs that are not f.g.} an infinite class of non-isomorphic algebraic monoid valuations of $\nn_0[X]$ (of any possible rank) that are FFMs but not FGMs and, therefore, not HFMs by virtue of Corollary~\ref{cor:HFM implies FFM in CARS}. Unlike the case of rational monoid valuations of $\nn_0[X]$, we will see in Proposition~\ref{prop:infinitely many f.g. CARS that are not OHFMs} that there are infinitely many non-isomorphic algebraic monoid valuations of $\nn_0[X]$ that are FGMs. The following proposition gives a necessary condition for $\nn_0[\alpha]$ to be an FGM.

\begin{prop} \label{prop:finitely generated CARS necessary conditions}
		If $\nn_0[\alpha]$ is an FGM for some algebraic $\alpha \in \rr_{>0}$, then $m_\alpha(X) \in \zz[X]$ and its only positive root is $\alpha$ (counting multiplicity).
\end{prop}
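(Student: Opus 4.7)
The plan is to leverage the corollary to Theorem~\ref{thm:atomic characterization} that characterizes FGM monoid valuations: since $\nn_0[\alpha]$ is an FGM, there exists some $\sigma \in \nn$ such that $\mathcal{A}(\nn_0[\alpha]) = \{\alpha^j : j \in \ldb 0, \sigma-1 \rdb\}$. In the language of Theorem~\ref{thm:atomic characterization}, this puts us in the regime $\sigma < \infty$, and in particular $\alpha \geq 1$ (as was observed inside the proof of that theorem). By the minimality of $\sigma$, the power $\alpha^\sigma$ lies in $\langle \alpha^j : 0 \le j \le \sigma-1 \rangle$, so we can write
\[
    \alpha^\sigma \;=\; \sum_{j=0}^{\sigma-1} c_j \, \alpha^j
\]
for some nonnegative integers $c_0,\dots,c_{\sigma-1}$. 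Thus $\alpha$ is a root of the monic polynomial
\[
    f(X) \;:=\; X^\sigma - \sum_{j=0}^{\sigma-1} c_j X^j \;\in\; \zz[X].
\]

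From here the two conclusions follow quickly. First, $\alpha$ is an algebraic integer, and so its minimal polynomial $m_\alpha(X)$ lies in $\zz[X]$ (by the standard fact that the minimal polynomial of an algebraic integer over $\qq$ is integral, or equivalently by Gauss's Lemma applied to the factorization $f(X) = m_\alpha(X) g(X)$ with $g(X) \in \qq[X]$). This gives the first assertion.

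Second, I claim that $\alpha$ is the only positive real root of $f(X)$ counted with multiplicity. Indeed, the sequence of nonzero coefficients of $f(X)$ begins with $+1$ (the leading coefficient) and every subsequent nonzero coefficient is $-c_j$ with $c_j > 0$; hence the number of sign variations in $f(X)$ is exactly one. Descartes' Rule of Signs then guarantees that $f(X)$ has at most one positive root counting multiplicity. Since $\alpha > 0$ is a root of $f(X)$, it is the unique such root. Because $m_\alpha(X)$ divides $f(X)$ in $\qq[X]$, every positive root of $m_\alpha(X)$ is a positive root of $f(X)$, forcing $\alpha$ to be the only positive root of $m_\alpha(X)$; that this root is simple is automatic, since $m_\alpha(X)$ is irreducible over a field of characteristic zero and therefore separable.

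There is no real obstacle here: the crux is simply realizing that finite generation forces a monic integral dependence relation of the specific shape $X^\sigma - (\text{nonnegative combination of lower powers})$, and that such a polynomial is immediately visible to Descartes' Rule of Signs. The rest is bookkeeping.
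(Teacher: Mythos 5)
Your proof is correct and follows essentially the same route as the paper's: extract the relation $\alpha^\sigma = \sum_{j<\sigma} c_j\alpha^j$ from finite generation via Theorem~\ref{thm:atomic characterization}, apply Gauss's Lemma to the resulting monic integer polynomial to get $m_\alpha(X) \in \zz[X]$, and use Descartes' Rule of Signs on that polynomial (which has exactly one sign variation) to isolate $\alpha$ as the unique positive root. The added remark that irreducibility over $\qq$ forces the root to be simple is a harmless bonus.
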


\begin{proof}
	We first prove that the polynomial $m_\alpha(X)$ belongs to $\zz[X]$. Since $\nn_0[\alpha]$ is an FGM, it follows from \cite[Proposition~2.7.8]{GH06} that it is an FFM. In particular, $\nn_0[\alpha]$ is atomic, and it follows from Theorem~\ref{thm:atomic characterization} that $\mathcal{A}(\nn_0[\alpha]) = \{\alpha^j : j \in \ldb 0,n-1 \rdb \}$ for some $n \in \nn$. Thus, we can write $\alpha^n = \sum_{i=0}^{n-1} c_i \alpha^i$ for some $c_0, \dots, c_{n-1} \in \nn_0$. Since $\alpha$ is a root of the polynomial $h(X) := X^n - \sum_{i=0}^{n-1} c_i X^i \in \qq[X] $, the minimal polynomial $m_\alpha(X)$ of $\alpha$ divides $h(X)$ in $\qq[X]$. Then $h(X) = m_\alpha(X)f(X)$ for some $f(X) \in \qq[X]$. Since $h(X)$ has integer coefficients, it follows from Gauss' Lemma that both $m_\alpha(X)$ and $f(X)$ belong to $\zz[X]$.
	
	It only remains to check that $\alpha$ is the only positive root of $m_\alpha(X)$. By virtue of Descartes' Rule of Signs, the polynomial $h(X)$ defined in the previous paragraph has exactly one positive real root, which must be~$\alpha$. This, together with the fact that $m_\alpha(X)$ divides $h(X)$ in $\qq[X]$, guarantees that $\alpha$ is the only positive real root of $m_\alpha(X)$.
\end{proof}

The necessary condition for the monoid $\nn_0[\alpha]$ to be finitely generated in Proposition~\ref{prop:finitely generated CARS necessary conditions} is not a sufficient condition, as the following example shows.

\begin{example}
	Let $\alpha$ be the only positive root of the polynomial $m_\alpha(X) = X^2 + 2X - 2$, and suppose towards a contradiction that $\nn_0[\alpha]$ is an FGM. The monoid $\nn_0[\alpha]$ is certainly atomic by part~(1) of Proposition~\ref{prop:atomicity sufficient conditions}, and so it follows from Theorem~\ref{thm:atomic characterization} that $\alpha^n \in \langle \alpha^j : j \in \ldb 0, n-1 \rdb \rangle$ for some $n \in \nn$. As a result, $m_\alpha(X)$ must divide a polynomial $X^n + \sum_{i=0}^{n-1} c_iX^i \in \zz[X]$ with $c_0, \dots, c_{n-1} \in \zz_{\le 0}$ for every $i \in \ldb 0,n-1 \rdb$. Notice that $n \ge 3$. Take $a_0, \dots, a_{n-3} \in \qq$ such that
	\begin{equation} \label{eq:to compare coefficients}
		(X^2 + 2X - 2) \bigg( X^{n-2} + \sum_{i=0}^{n-3} a_i X^i \bigg) = X^n + \sum_{i=0}^{n-1} c_iX^i.
	\end{equation}
	Observe that $a_0 = - \frac{1}{2}c_0 > 0$ and $a_1 = \frac{1}{2}(2a_0 - c_1) > 0$. In addition, if $a_j > 0$ for every $j \in \ldb 0,k \rdb$ for some $k < n-3$, then one can compare the coefficients of $X^{k+1}$ in both sides of~\eqref{eq:to compare coefficients} to find that $a_{k+1} = \frac{1}{2}(2a_k + a_{k-1} - c_{k+1}) > 0$. Hence $a_0, \dots, a_{n-3}$ are all positive. Now, after comparing the coefficients of $X^{n-1}$ in both sides of~\eqref{eq:to compare coefficients}, one obtains that $c_{n-1} = 2 + a_{n-3} > 0$, which is a contradiction. Thus, $\nn_0[\alpha]$ cannot be an FGM.
\end{example}

We have seen in Example~\ref{ex:two initial examples}(2) rank-$1$ monoid valuations of $\nn_0[X]$ that are FFMs but not FGMs. We conclude this section constructing for every $d \in \nn$ an infinite class of algebraic semiring valuations of $\nn_0[X]$ whose additive monoids are rank-$d$ FFMs but not FGMs.

\begin{prop} \label{prop:FFMs that are not f.g.}
	For each $d \in \nn$, there exist infinitely many non-isomorphic algebraic semiring valuations of $\nn_0[X]$ whose additive monoids are rank-$d$ FFMs that are not FGMs.
\end{prop}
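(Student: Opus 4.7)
The plan is to produce, for each $d \in \nn$, an infinite family of algebraic monoid valuations by letting $\alpha_p$ denote the unique positive real root of $X^d - p/2$ for each odd prime $p$. First, since $\gcd(2,p) = 1$, applying Eisenstein's criterion at $p$ to the primitive polynomial $2X^d - p \in \zz[X]$ yields irreducibility, so $m_{\alpha_p}(X) = X^d - p/2$ and $\alpha_p$ has degree $d$ over $\qq$. Proposition~\ref{prop:rank of CARs} then gives $\rank\, \nn_0[\alpha_p] = d$.

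Because $|m_{\alpha_p}(0)| = p/2 \neq 1$, Proposition~\ref{prop:atomicity sufficient conditions}(1) ensures that $\nn_0[\alpha_p]$ is atomic; and because $\alpha_p > 1$, each bounded subinterval of $\rr_{\ge 0}$ meets $\nn_0[\alpha_p]$ in only finitely many points, so any ascending chain of principal ideals of $\nn_0[\alpha_p]$ stabilizes and the ACCP holds. Theorem~\ref{thm:BFM/FFM equivalence} then upgrades this to the FFM property.

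The crux of the proof is to verify that $\nn_0[\alpha_p]$ is not finitely generated. By Theorem~\ref{thm:atomic characterization}, it suffices to show that $\sigma := \min\{n \in \nn : \alpha_p^n \in \langle \alpha_p^j : j \in \ldb 0, n-1 \rdb \rangle\} = \infty$. Suppose instead $\sigma < \infty$ and write $\alpha_p^\sigma = \sum_{j=0}^{\sigma-1} c_j \alpha_p^j$ with $c_j \in \nn_0$. Using $\alpha_p^d = p/2$, each $\alpha_p^j$ reduces to $(p/2)^{\lfloor j/d \rfloor}\, \alpha_p^{j \bmod d}$, and similarly $\alpha_p^\sigma = (p/2)^k \alpha_p^s$ where $\sigma = kd + s$ with $0 \le s < d$ and $k \ge 1$. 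Projecting the equation onto the $\qq$-basis $\{1, \alpha_p, \dots, \alpha_p^{d-1}\}$ of $\qq(\alpha_p)$ and using nonnegativity of the $c_j$ together with positivity of the powers $(p/2)^{k_j}$, every coefficient $c_j$ with $j \not\equiv s \pmod{d}$ must vanish; what remains is the scalar identity $\sum_{i=0}^{k-1} c_{id+s}(p/2)^i = (p/2)^k$. Multiplying through by $2^k$ yields $\sum_{i=0}^{k-1} c_{id+s}\, p^i\, 2^{k-i} = p^k$, whose left-hand side is divisible by $2$ while the right-hand side is odd, a contradiction. I expect this $2$-adic parity argument (and the preliminary linear-algebra reduction enabling it) to be the main obstacle.

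Finally, varying $p$ over the odd primes delivers infinitely many non-isomorphic examples. For $d = 1$, the values $\alpha_p = p/2$ are pairwise distinct positive rationals, so Proposition~\ref{prop:isomorphism classes of CARS}(2) separates the semirings $S_{\alpha_p}$. For $d \ge 2$, Proposition~\ref{prop:isomorphism classes of CARS}(3) permits $S_{\alpha_p} \cong S_{\alpha_{p'}}$ only when $\alpha_{p'}$ is a $\qq$-conjugate of $\alpha_p$, i.e., another root of $X^d - p/2$; since distinct odd primes give polynomials with disjoint root sets, this forces $p = p'$.
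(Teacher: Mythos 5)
Your proof is correct, and it takes a genuinely different route from the paper's. The paper splits into cases: for $d=1$ it uses the rational valuations $\nn_0[q]$ with $q \in \qq_{\ge 1}\setminus\nn$, and for $d \ge 2$ it uses the family $(p-2)X^d + X - p$ with $p \in \pp_{\ge 5}$, proving irreducibility by showing the polynomial has no complex roots in the closed unit disc and deducing non-finite-generation from the fact that $m_{\alpha_p}(X) \notin \zz[X]$ via Proposition~\ref{prop:finitely generated CARS necessary conditions}. Your single family $X^d - p/2$ handles all $d \ge 1$ uniformly, gets irreducibility in one line from Eisenstein, and your $2$-adic parity computation showing $\sigma = \infty$ is a valid, self-contained replacement for the paper's appeal to Proposition~\ref{prop:finitely generated CARS necessary conditions} --- all the intermediate steps check out: the reduction $\alpha_p^j = (p/2)^{\lfloor j/d\rfloor}\alpha_p^{j \bmod d}$, the vanishing of the off-residue coefficients by positivity, and the even-equals-odd contradiction after clearing denominators. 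Your route to the FFM property (finiteness of $\nn_0[\alpha_p]\cap[0,N]$ gives the ACCP, then Theorem~\ref{thm:BFM/FFM equivalence}) also works and stays within the paper's own results, whereas the paper cites the external result that increasingly generated positive monoids are FFMs. One remark: since $m_{\alpha_p}(0) = -p/2 \notin \zz$, your examples also fail the necessary condition $m_\alpha(X) \in \zz[X]$ of Proposition~\ref{prop:finitely generated CARS necessary conditions}, so you could have replaced the entire $\sigma = \infty$ argument by a one-line citation; your longer argument buys a concrete description of the atom set $\{\alpha_p^n : n \in \nn_0\}$ but is not needed for the statement.
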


\begin{proof}
	Suppose first that $d=1$. For each $\alpha \in \qq_{\ge 1} \setminus \nn$, consider the semiring valuation $\nn_0[\alpha]$. As $\nn_0[\alpha] \subseteq \qq$, the monoid $\nn_0[\alpha]$ has rank $1$. Since $\nn_0[\alpha]$ is increasingly generated, it follows from~\cite[Theorem~5.6]{fG18} that $\nn_0[\alpha]$ is an FFM. In addition, it is not hard to verify that $\mathcal{A}(\nn_0[\alpha]) = \{\alpha^n : n \in \nn_0\}$ (see \cite[Proposition~4.3]{CGG20a}), and so $\nn_0[\alpha]$ is not an FGM. Lastly, part~(1) of Proposition~\ref{prop:isomorphism classes of CARS} guarantees that $\nn_0[\alpha] \not\cong \nn_0[\beta]$ as semirings for any $\beta \in \qq_{\ge 1} \setminus \nn_0$ with $\beta \neq \alpha$.
	
	Suppose now that $d \ge 2$. Take $p \in \pp_{\ge 5}$, and then consider the polynomial $m(X) = (p-2)X^d + X - p$. We observe that $m(X)$ cannot have any complex root $\rho$ inside the closed unit disc as, otherwise, $p = |(p-2) \rho^d + \rho| \le (p-2) |\rho|^d + |\rho| \le p-1$. To verify that $m(X)$ is irreducible in $\zz[X]$ suppose, by way of contradiction, that $m(X) = f(X) g(X)$ for some $f(X), g(X) \in \zz[X] \setminus \zz$. Hence either $f(0)$ or $g(0)$ divides $p$. Assume, without loss of generality, that $|f(0)| = 1$ and set $n = \deg f(X)$. After denoting the complex roots of $f(X)$ by $\rho_1, \dots, \rho_n$ and its leading coefficient by $c$, one finds that $|\rho_1 \cdots \rho_n| = |\frac 1c | \le 1$. Therefore there is a $j \in \ldb 1,n \rdb$ such that $|\rho_j| \le 1$. However, this contradicts that $\rho_j$ is also a root of $m(X)$. Thus, $m(X)$ is irreducible. Since $m(1) < 0$, the polynomial $m(X)$ has a root $\alpha_p \in \rr_{> 1}$. Now Gauss' Lemma guarantees that $m_{\alpha_p}(X) = X^d + \frac{1}{p-2} X - \frac{p}{p-2} \in \qq[X]$ is the minimal polynomial of $\alpha_p$.
	
	It follows from Proposition~\ref{prop:rank of CARs} that $\nn_0[\alpha_p]$ has rank $d$. As in the case of $d=1$, the fact that $\nn_0[\alpha_p]$ is increasingly generated ensures that it is an FFM. Because $m_{\alpha_p}(X) \notin \zz[X]$, it follows from Proposition~\ref{prop:finitely generated CARS necessary conditions} that $\nn_0[\alpha_p]$ is not an FGM. Then for each prime $p \in \pp_{\ge 5}$ we can consider the semiring valuation $\nn_0[\alpha_p]$ of $\nn_0[X]$ whose additive monoid is a rank-$d$ FFM that is not an FGM. In light of Proposition~\ref{prop:isomorphism classes of CARS}, distinct parameters $p \in \pp_{\ge 5}$ yield non-isomorphic semirings $\nn_0[\alpha_p]$.
\end{proof}

\smallskip
\subsection{Length-Factoriality}

Let us call an LFM \emph{proper} if it is not a UFM. It was proved in~\cite{CS11} that the multiplicative monoid of an integral domain is never a proper LFM. There are, however, algebraic semiring valuations of $\nn_0[X]$ whose additive monoids are proper LFMs. Let us proceed to characterize such semirings. 

\begin{theorem} \label{thm:OHFM characterization}
	For an algebraic number $\alpha \in \rr_{> 0}$, the following conditions are equivalent.
	\begin{enumerate}
		\item[(a)] $\nn_0[\alpha]$ is a proper LFM.
		\smallskip
		
		\item[(b)] $\mathcal{A}(\nn_0[\alpha]) = \{\alpha^j : j \in \ldb 0, \deg m_\alpha(X) \rdb \}$.
	\end{enumerate}
\end{theorem}

\begin{proof}
	(a) $\Rightarrow$ (b): Since $\nn_0[\alpha]$ is not a UFM, it follows from Theorem~\ref{thm:UFM characterization} that
	\[
		\deg m_\alpha(X) < \sigma := \min \{n \in \nn : \alpha^n \in \langle \alpha^j : j \in \ldb 0, n-1 \rdb \rangle \}.
	\]
	Let $(p(X),q(X))$ be the minimal pair of $\alpha$, and suppose that $\deg m_\alpha(X) + 1 < \sigma$. Consider the polynomials $z_1(X) := p(X) + Xq(X)$ and $z_2(X) := q(X) + Xp(X)$ of $\nn_0[X]$. Since the equality $z_2(X) - z_1(X) = (X-1)m_\alpha(X)$ holds and $\deg m_\alpha(X) + 1 < \sigma$, it follows that $z_1(\alpha)$ and $z_2(\alpha)$ are two distinct factorizations in $\mathsf{Z}(\nn_0[\alpha])$ of the same element. As a result, $|z_1(\alpha)| = z_1(1) = z_2(1) = |z_2(\alpha)|$ guarantees that $\nn_0[\alpha]$ is not an LFM. Hence if $\nn_0[\alpha]$ is a proper LFM, then $\deg m_\alpha(X) = \sigma - 1$, and so (b) follows from Theorem~\ref{thm:atomic characterization}.
	\smallskip
	
	(b) $\Rightarrow$ (a): As $1 \in \mathcal{A}(\nn_0[\alpha])$, the monoid $\nn_0[\alpha]$ is atomic by Theorem~\ref{thm:atomic characterization}. Let $z_1, z_2 \in \mathsf{Z}(\nn_0[\alpha])$ be two factorizations of the same element such that $|z_1| = |z_2|$. In order to show that $z_1 = z_2$, there is no loss of generality in assuming that $z_1$ and $z_2$ have no atoms in common. Because $\alpha$ is a root of the polynomial $z_2(X) - z_1(X) \in \zz[X]$, whose degree is at most $\deg m_\alpha(X)$, there is a $c \in \qq$ such that $z_1(X) - z_2(X) = c \, m_\alpha(X)$. Since $|z_1| = |z_2|$, we see that $1$ is a root of $z_2(X) - z_1(X)$. However, as $\nn_0[\alpha]$ is an FGM, it follows from Proposition~\ref{prop:finitely generated CARS necessary conditions} that $1$ is not a root of $m_\alpha(X)$. Hence $c = 0$, which implies that $z_1(\alpha) = z_2(\alpha)$. As a result, $\nn_0[\alpha]$ is an LFM. That $\nn_0[\alpha]$ is not a UFM is an immediate consequence of part~(2) of Theorem~\ref{thm:UFM characterization}.
\end{proof}

The following corollary is an immediate consequence of Theorem~\ref{thm:OHFM characterization}.

\begin{cor}[cf. Question~\ref{quest:LFM}] \label{cor:OHFM implies f.g.}
	If $\nn_0[\alpha]$ is a proper LFM for some $\alpha \in \rr_{> 0}$, then it is an FGM and, therefore, an FFM.
\end{cor}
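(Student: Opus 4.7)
The plan is to read the conclusion off directly from the characterization in Theorem~\ref{thm:OHFM characterization}. If $\nn_0[\alpha]$ is a proper OHFM, then by the equivalence (a) $\Leftrightarrow$ (b) in that theorem, $\alpha$ must be algebraic and the set of atoms is
\[
\mathcal{A}(\nn_0[\alpha]) = \{\alpha^j : j \in \ldb 0, \deg m_\alpha(X) \rdb\},
\]
which is a \emph{finite} set. Since every OHFM is atomic by definition, the monoid $\nn_0[\alpha]$ is generated (as a monoid) by this finite set of atoms, so $\nn_0[\alpha]$ is an FGM.

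For the second half of the conclusion, the plan is to invoke the already-cited fact \cite[Proposition~2.7.8]{GH06} that every FGM is an FFM. This is exactly the same implication used in the background section and again in the opening paragraph of Section~\ref{sec:factoriality}, so no further work is needed here.

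I do not anticipate any genuine obstacle: the content of this corollary is simply that the characterization in Theorem~\ref{thm:OHFM characterization}(b) produces a finite atom set, and an atomic monoid with finitely many atoms is automatically finitely generated. The entire proof should therefore be one or two sentences that cite Theorem~\ref{thm:OHFM characterization} to obtain the FGM property and then cite \cite[Proposition~2.7.8]{GH06} to upgrade this to the FFM property.
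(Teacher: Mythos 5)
Your proposal is correct and is exactly the argument the paper intends: the paper presents this corollary as an ``immediate consequence'' of Theorem~\ref{thm:OHFM characterization}, relying precisely on the fact that condition~(b) gives a finite atom set, that an atomic monoid is generated by its atoms, and that every FGM is an FFM by \cite[Proposition~2.7.8]{GH06}. No gaps.
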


For every $d \ge 3$, we proceed to identify algebraic semiring valuations whose additive monoids are proper LFMs of rank~$d$.

\begin{prop}
	For $d \ge 3$ there are infinitely many non-isomorphic algebraic semiring valuations of $\nn_0[X]$ whose additive monoids are proper LFMs of rank $d$.
\end{prop}

\begin{proof}
	Fix $d \in \nn_{\ge 3}$ and $p \in \pp$. Now consider the polynomial
	\[
		m(X) = X^d - pX^{d-1} + pX^{d-2} - \sum_{i=0}^{d-3} pX^i.
	\]
	The polynomial $m(X)$ is irreducible by Eisenstein's Criterion. Since $m(1) = 1 - (d-2)p < 0$, the polynomial $m(X)$ has a real root $\alpha_p > 1$. Let $(p(X),q(X))$ be the minimal pair of $\alpha_p$, and consider the semiring valuation $\nn_0[\alpha_p]$. The monoid $\nn_0[\alpha_p]$ has rank $d$ by Proposition~\ref{prop:rank of CARs}. On the other hand, it follows from part~(1) of Proposition~\ref{prop:atomicity sufficient conditions} that $\nn_0[\alpha_p]$ is atomic. In addition,
	\[
		m(X) (X+1) = X^{d+1} - (p-1)X^d - \bigg( \sum_{i=1}^{d-3} 2p X^i \bigg) - p
	\]
	implies that $\mathcal{A}(\nn_0[\alpha_p]) = \{\alpha_p^j : j \in \ldb 0, k \rdb \}$ for some $k \in \{d-1,d\}$. However, notice that if $k = d-1$, then Theorem~\ref{thm:UFM characterization} would force $p(X)$ to be a monomial, which is not the case. Therefore we obtain that $\mathcal{A}(\nn_0[\alpha_p]) = \{\alpha_p^j : j \in \ldb 0, d \rdb \}$, and it follows from Theorem~\ref{thm:OHFM characterization} that $\nn_0[\alpha_p]$ is a proper LFM. Finally, observe that by Proposition~\ref{prop:isomorphism classes of CARS} different choices of $p \in \pp$ yield non-isomorphic semiring valuations $\nn_0[\alpha_p]$ whose additive monoids satisfy the desired conditions.
\end{proof}

We have just seen that if an algebraic monoid valuation $\nn_0[\alpha]$ is an LFM, then it is an FGM. These two properties are equivalent when $\nn_0[\alpha]$ has rank $2$.

\begin{prop} \label{prop:UFM/OHFM/FGM are equivalent in rank at most 2}
	Let $\alpha \in \rr_{> 0}$ be an algebraic number such that $\nn_0[\alpha]$ has rank at most $2$. Then the following conditions are equivalent.
	\begin{enumerate}
		\item[(a)] $\nn_0[\alpha]$ is a UFM.
		\smallskip
		
		\item[(b)] $\nn_0[\alpha]$ is an LFM.
		\smallskip
		
		\item[(c)] $\nn_0[\alpha]$ is an FGM.		
	\end{enumerate}
\end{prop}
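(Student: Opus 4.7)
The plan is to establish the three equivalences via the cycle $(\text{a})\Rightarrow(\text{b})\Rightarrow(\text{c})\Rightarrow(\text{a})$. The first two implications are essentially already at hand: every UFM is trivially an OHFM, because uniqueness of factorization forbids any two distinct factorizations of the same element (let alone ones of the same length); and every OHFM---whether it happens to be a UFM (in which case Corollary~\ref{cor:HFM implies FFM in CARS} applies) or a proper OHFM (to which Corollary~\ref{cor:OHFM implies f.g.} applies)---is an FGM. The substantive content therefore lies in $(\text{c})\Rightarrow(\text{a})$.

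To prove $(\text{c})\Rightarrow(\text{a})$, the starting point will be Proposition~\ref{prop:finitely generated CARS necessary conditions}, which forces $m_\alpha(X) \in \zz[X]$ with $\alpha$ as its only positive real root. Combined with the rank hypothesis $\deg m_\alpha = 2$ and a quick examination of the conjugate $\alpha'$ (which cannot be positive, zero, or nonreal without contradicting either the irreducibility of $m_\alpha$ or the single-positive-root condition), this forces $m_\alpha(X) = X^2 + aX - c$ with $a \in \zz$ and $c \in \nn$. The argument will then split according to the sign of $a$.

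The easy case is $a \leq 0$. Here the minimal pair of $\alpha$ is $(X^2,\, -aX + c)$, whose first component $p(X) = X^2$ is a monomial, so part~(2) of Theorem~\ref{thm:UFM characterization} immediately yields that $\nn_0[\alpha]$ is a UFM.

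The main obstacle is the case $a \geq 1$, where the strategy is to derive a contradiction from the FGM assumption itself. Assume for contradiction that $\sigma := |\mathcal{A}(\nn_0[\alpha])| < \infty$; Theorem~\ref{thm:atomic characterization} then produces $c_0,\ldots,c_{\sigma-1} \in \nn_0$ with $\alpha^\sigma = \sum_{i=0}^{\sigma-1} c_i \alpha^i$. Setting $P(X) = X^\sigma - \sum_i c_i X^i \in \zz[X]$ and applying Gauss's lemma to the monic divisor $m_\alpha$, one obtains a monic polynomial $Q(X) = \sum_{j=0}^{\sigma-2} q_j X^j \in \zz[X]$ with $m_\alpha(X) Q(X) = P(X)$. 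Comparing coefficients of $X^k$ in this identity yields the recursion $c_k = c q_k - a q_{k-1} - q_{k-2}$ (with the convention $q_j := 0$ outside $0 \le j \le \sigma-2$). Using $a, c \geq 1$ together with the nonnegativity of every $c_k$, a straightforward induction on $k$ gives $q_k \geq 0$ for $0 \leq k \leq \sigma-2$. However, the relation at $k = \sigma - 1$ rearranges to $q_{\sigma-3} = -a - c_{\sigma-1} \leq -a \leq -1$, contradicting $q_{\sigma-3} \geq 0$. (The degenerate case $\sigma = 2$ is even easier: $Q = 1$ forces $c_1 = -a \leq -1 < 0$.) The key difficulty is calibrating this coefficient induction so that every $q_j$ is trapped on the nonnegative side before the top-degree constraint forces a strictly negative value; the specific form $m_\alpha(X) = X^2 + aX - c$ with $a,c \geq 1$ is exactly what makes both halves of this squeeze work.
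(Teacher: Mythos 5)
Your proposal is correct and follows essentially the same route as the paper: the first two implications are dispatched by the same corollaries, and for (c)$\Rightarrow$(a) the paper likewise reduces to $m_\alpha(X)=X^2+aX-b$ with $b>0$, assumes $a>0$, writes $m_\alpha(X)f(X)$ as a monic polynomial whose non-leading coefficients are nonpositive, proves by a bottom-up induction that the cofactor has nonnegative coefficients, and reaches the same contradiction at the degree-$(k+1)$ coefficient. Your only (immaterial) deviation is extracting the cofactor $Q$ directly from the atomic relation $\alpha^\sigma=\sum c_i\alpha^i$ via Gauss's lemma rather than choosing a minimal-degree multiplier $f$.
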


\begin{proof}
	(a) $\Rightarrow$ (b): This is clear.
	\smallskip
	
	(b) $\Rightarrow$ (c): This is an immediate consequence of Theorem~\ref{thm:UFM characterization} and Corollary~\ref{cor:OHFM implies f.g.}.
	\smallskip
	
	(c) $\Rightarrow$ (a): Assume first that $\nn_0[\alpha]$ is an FGM with rank $1$. It follows from Proposition~\ref{prop:rank of CARs} that $\nn_0[\alpha]$ is a rational monoid valuation of $\nn_0[X]$. Since $\nn_0[\alpha]$ is an FGM, \cite[Proposition~4.3]{CGG20a} guarantees that $\nn_0[\alpha] = \nn_0$ and, therefore, it is a  UFM.
	
	Now assume that $\nn_0[\alpha]$ is an FGM with rank $2$. It follows from Proposition~\ref{prop:rank of CARs} that the irreducible polynomial $m_\alpha(X)$ of $\alpha$ has degree $2$, and it follows from Proposition~\ref{prop:finitely generated CARS necessary conditions} that $m_\alpha(X)$ belongs to $\zz[X]$ and has $\alpha$ as its unique positive root (counting multiplicity). Write $m_\alpha(X) = X^2 + aX - b$ for some $a,b \in \zz$. As $m_\alpha(X)$ has a unique positive root, Descartes' Rule of Signs guarantees that $b > 0$. 
	
	Suppose, by way of contradiction, that $a > 0$. Since $\nn_0[\alpha]$ is an FGM, there is a polynomial $f(X) \in \zz[X]$ such that $m_\alpha(X)f(X)$ is monic and its only positive coefficient is its leading coefficient (see the proof of Proposition~\ref{prop:finitely generated CARS necessary conditions}). Assume that the polynomial $f(X)$ has the least degree possible. Now write $f(X) = X^k + \sum_{i=0}^{k-1} c_i X^i$ for $c_0, \dots, c_{k-1} \in \zz$ and
	\begin{equation} \label{eq:product of polynomials}
		m_\alpha(X)f(X) = \big( X^2 + aX - b \big) \bigg( X^k + \sum_{i=0}^{k-1} c_i X^i \bigg) = X^{k+2} + \sum_{i=0}^{k+1} d_i X^i
	\end{equation}
	for $d_0, \dots, d_{k+1} \in \zz_{\le 0}$. Since $a>0$, we see that $\deg f(X) \ge 1$. As $d_0 < 0$, we obtain from~\eqref{eq:product of polynomials} that $c_0 > 0$. Observe that $\deg f(X) \ge 2$ as, otherwise, $d_2 \le 0$ and~\eqref{eq:product of polynomials} would imply that $c_0 \le -a < 0$. As $d_1 \le 0$, we obtain from~\eqref{eq:product of polynomials} that $c_1 \ge \frac{a c_0}b > 0$. As before, $\deg f(X) \ge 3$; otherwise, $d_3 \le 0$ would imply that $c_1 \le -a < 0$. For each $j \in \ldb 2,k-1 \rdb$, one can compare coefficients in~\eqref{eq:product of polynomials} to find that $bc_j  \ge c_{j-2} + a c_{j-1}$. Now an immediate induction reveals that $c_j  \ge 0$ for every $j \in \ldb 0, k-1 \rdb$. However, comparing the coefficients of the terms of degree $k+1$ in~\eqref{eq:product of polynomials}, we see that $c_{k-1} \le -a < 0$, which is a contradiction. As a consequence, $a \le 0$. Hence $\nn_0[\alpha]$ is a UFM by Theorem~\ref{thm:UFM characterization}.
\end{proof}

It follows from Proposition~\ref{prop:UFM/OHFM/FGM are equivalent in rank at most 2} that if $\nn_0[\alpha]$ is a proper LFM, then its rank is at least $3$. For each rank $d \in \nn_{\ge 3}$, there are infinitely many non-isomorphic algebraic semiring valuations of $\nn_0[X]$ whose additive monoids are rank-$d$ FGMs that are not LFMs.

\begin{prop} \label{prop:infinitely many f.g. CARS that are not OHFMs}
	For each $d \in \nn_{\ge 4}$, there exist infinitely many non-isomorphic semiring valuations of $\nn_0[X]$ whose additive monoids are rank-$d$ FGMs that are not LFMs.
\end{prop}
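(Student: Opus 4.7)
The strategy is modeled on the proofs of Proposition~\ref{prop:FFMs that are not f.g.} and the last assertion of Theorem~\ref{thm:OHFM characterization}: for each $d\ge 4$, I would exhibit a parametric family $\{m_{d,p}(X)\}$ of irreducible monic polynomials in $\zz[X]$ of degree $d$, indexed by all sufficiently large primes $p$, whose unique positive real roots $\alpha_p$ satisfy $|\mathcal{A}(\nn_0[\alpha_p])|=d+2$. This is the exact cardinality needed to get an FGM that is neither a UFM (which by Theorem~\ref{thm:UFM characterization} would require $|\mathcal{A}|=d$) nor a proper OHFM (which by Theorem~\ref{thm:OHFM characterization} would require $|\mathcal{A}|=d+1$), and hence not an OHFM at all. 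Rank $d$ is immediate from Proposition~\ref{prop:rank of CARs}, and non-isomorphism across distinct primes follows from Proposition~\ref{prop:isomorphism classes of CARS} together with the fact that distinct primes produce distinct minimal polynomials.

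I would define $m_{d,p}(X) := X^d + \sum_{i=1}^{d-1} m_i X^i - 2p$, where $m_{d-1}, m_{d-2}, \ldots, m_1$ cycles through the period-three sequence $(-2,1,1)$ starting with $m_{d-1}=-2$; explicitly, $m_i=-2$ when $d-1-i\equiv 0\pmod 3$ and $m_i=1$ otherwise. The telescoping identity $(-2)+1+1=0$ then makes the coefficient of $X^k$ in $(X^2+X+1)\cdot m_{d,p}(X)$ vanish for every $3\le k\le d$, while straightforward bookkeeping yields $-1$ at $X^{d+1}$ and the values $-2p+(m_1+m_2)$, $-2p+m_1$, $-2p$ at $X^2$, $X$, and $X^0$ respectively; all of these are $\le 0$ once $p\ge 2$. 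This shows $\alpha_p^{d+2}\in \langle \alpha_p^j : j\in\ldb 0,d+1\rdb\rangle$, so the exponent $\sigma := \min\{n\in\nn : \alpha_p^n\in\langle\alpha_p^j : j\in\ldb 0,n-1\rdb\rangle\}$ satisfies $\sigma\le d+2$. As a bonus, the product $(X^2+X+1)m_{d,p}(X)$ has exactly one sign change, so Descartes' Rule of Signs certifies that $m_{d,p}$ has a unique positive real root $\alpha_p$.

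To rule out $\sigma \le d+1$, I would observe two things. First, $m_{d,p}$ has positive non-leading coefficients (the $+1$'s from the pattern), so $\alpha_p^d$ is not a $\nn_0$-combination of $\{1,\alpha_p,\ldots,\alpha_p^{d-1}\}$ and thus $\sigma\ne d$. Second, if $\alpha_p^{d+1}\in\langle\alpha_p^j : j\in\ldb 0,d\rdb\rangle$, then by Gauss's Lemma there would be an integer $e$ for which every non-leading coefficient of $(X+e)m_{d,p}(X)$ is non-positive; but inspecting the coefficients of $X^{d-1}$ and $X^{d-2}$ in that product, which equal $m_{d-2}+e\,m_{d-1}=1-2e$ and $m_{d-3}+e\,m_{d-2}=1+e$ respectively, forces $e\ge 1$ and $e\le -1$ simultaneously, which is impossible. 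Therefore $\sigma=d+2$, and so $|\mathcal{A}(\nn_0[\alpha_p])|=d+2$ by Theorem~\ref{thm:atomic characterization}.

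The main obstacle will be establishing irreducibility of $m_{d,p}$ over $\zz$ for infinitely many primes $p$. The plan is a complex-root-size estimate: if $m_{d,p}=fg$ in $\zz[X]$ with both $f,g$ monic of positive degree, then $f(0)g(0)=-2p$ forces one factor, say $g$, to have $|g(0)|\in\{1,2\}$; since the product of the absolute values of the complex roots of $g$ equals $|g(0)|$, some root $\beta$ of $g$ (hence of $m_{d,p}$) satisfies $|\beta|\le 2^{1/\deg g}\le 2$. On the other hand,
\[
	|m_{d,p}(\beta)| \;\ge\; 2p - |\beta|^d - \sum_{i=1}^{d-1}|m_i|\,|\beta|^i \;\ge\; 2p - 3\cdot 2^d + 4,
\]
which is strictly positive whenever $p > 3\cdot 2^{d-1} - 2$, contradicting $m_{d,p}(\beta)=0$. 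Hence $m_{d,p}$ is irreducible over $\zz$, and so over $\qq$ by Gauss's Lemma, for every prime $p$ above this threshold; combining this with Proposition~\ref{prop:isomorphism classes of CARS} produces the required infinite family of non-isomorphic algebraic semiring valuations with the stated properties.
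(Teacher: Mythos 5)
Your proposal is correct and follows essentially the same route as the paper's proof: a prime-indexed family of irreducible degree-$d$ polynomials whose unique positive root $\alpha_p$ has $\sigma = d+2$, certified from above by multiplying by $X^2+X+1$ and from below by ruling out a linear cofactor $X+e$ via incompatible sign conditions on two consecutive coefficients, so that $\nn_0[\alpha_p]$ is an FGM with $|\mathcal{A}(\nn_0[\alpha_p])| = d+2$ and hence neither a UFM nor a proper OHFM, with rank and non-isomorphism read off from Propositions~\ref{prop:rank of CARs} and~\ref{prop:isomorphism classes of CARS}. The only substantive differences are that the paper chooses $m(X) = X^d - 3pX^{d-1} + 2pX^{d-2} - \sum_{i=0}^{d-3} pX^i$ so that Eisenstein's Criterion gives irreducibility for free (versus your root-modulus estimate, which also works), and you should insert one line invoking part~(1) of Proposition~\ref{prop:atomicity sufficient conditions} (via $|m_{d,p}(0)| = 2p \neq 1$) to establish atomicity before applying Theorem~\ref{thm:atomic characterization} to identify $\mathcal{A}(\nn_0[\alpha_p])$ with $\{\alpha_p^j : j \in \ldb 0, \sigma - 1\rdb\}$.
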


\begin{proof}
	Fix $d \in \nn_{\ge 4}$. Take $p \in \pp$, and consider the polynomial
	\[
		m(X) = X^d - 3pX^{d-1} + 2pX^{d-2} - \sum_{i=0}^{d-3} pX^i
	\]
	of $\zz[X]$. As $m(0) = -p$, the polynomial $m(X)$ has a positive root $\alpha_p$. Since $m(X)$ is irreducible (by virtue of Eisenstein's Criterion), it must be the minimal polynomial of $\alpha_p$. The monoid valuation $\nn_0[\alpha_p]$ has rank $d$ by Proposition~\ref{prop:rank of CARs}. In addition, it follows from Proposition~\ref{prop:isomorphism classes of CARS} that distinct choices of the parameter $p \in \pp$ yield non-isomorphic semiring valuations $\nn_0[\alpha_p]$ of $\nn_0[X]$. So proving the proposition amounts to showing that $\nn_0[\alpha_p]$ is an FGM that is not an LFM.
	
	For simplicity, set $\alpha = \alpha_p$. The monoid $\nn_0[\alpha]$ is atomic by part~(1) of Proposition~\ref{prop:atomicity sufficient conditions}. To show that $\nn_0[\alpha]$ is an FGM, set $f(X) := m(X) (X^2 + X + 1) \in \zz[X]$. One can immediately verify that the only positive coefficient of $f(X)$ is its leading coefficient. Therefore $\mathcal{A}(\nn_0[\alpha]) \subseteq \{\alpha^n : n \in \ldb 0, d+1 \rdb \}$. Since $\nn_0[\alpha]$ is atomic, it must be an FGM. To show that $\nn_0[\alpha]$ is not an LFM, it suffices to consider for every $b \in \zz$ the polynomial $g_b(X) = m(X) (X + b)$ and observe that one of the coefficients $-p(3b-2)$ and $-p(1-2b)$ of $g_b(X)$ (corresponding to degrees $d-1$ and $d-2$, respectively) must be positive. Thus,  $\nn_0[\alpha]$ is not an LFM by Theorem~\ref{thm:OHFM characterization}.
\end{proof}

Motivated by our proof of Proposition~\ref{prop:infinitely many f.g. CARS that are not OHFMs}, we are inclined to believe that the following related conjecture is true.

\begin{conj}
	For every $n \in \nn$ there is an algebraic number $\alpha \in \rr_{> 0}$ such that $|\mathcal{A}(\nn_0[\alpha])| - \deg m_\alpha(X) = n$.
\end{conj}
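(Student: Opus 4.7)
By Theorem~\ref{thm:atomic characterization}, whenever $\nn_0[\alpha]$ is atomic with $\sigma(\alpha) := \min\{k \in \nn : \alpha^k \in \langle \alpha^j : j \in \ldb 0, k-1 \rdb \rangle\}$ finite, one has $|\mathcal{A}(\nn_0[\alpha])| = \sigma(\alpha)$. Moreover $d := \deg m_\alpha(X) \le \sigma(\alpha)$, with equality if and only if $\nn_0[\alpha]$ is a UFM (Theorem~\ref{thm:UFM characterization}). The conjecture thus reduces to the following: for each $n \in \nn$, produce a positive algebraic $\alpha$ for which $\nn_0[\alpha]$ is an FGM with $\sigma(\alpha) - d = n$. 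Via Gauss's lemma (applied to the monic $m_\alpha$), this is equivalent to exhibiting a monic irreducible $m(X) \in \zz[X]$ of some degree $d$ with unique positive real root $\alpha$, together with a monic $h(X) \in \zz[X]$ of degree exactly $n$ such that $m(X) h(X) = X^{d+n} - Q(X)$ for some $Q \in \nn_0[X]$, while \emph{no} such factorization exists with a monic $h'(X) \in \zz[X]$ of degree strictly less than $n$.

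The plan is to extend the explicit constructions used in Theorem~\ref{thm:OHFM characterization} (case $n = 1$) and Proposition~\ref{prop:infinitely many f.g. CARS that are not OHFMs} (case $n = 2$). First, for a large prime $p$ and a degree $d$ much larger than $n$, I would set
\[ m(X) := X^d - p\, R(X), \]
where $R(X) \in \zz[X]$ has degree at most $d - 1$ and its top $n$ coefficients (of degrees $d-1, d-2, \dots, d-n$) follow a sign/size pattern to be engineered. Irreducibility of $m(X)$ will follow from Eisenstein's criterion at $p$; atomicity of $\nn_0[\alpha]$ from $|m(0)| = p \neq 1$ and Proposition~\ref{prop:atomicity sufficient conditions}(1); and existence and uniqueness of the positive real root $\alpha$ from a direct sign analysis combined with Descartes' rule. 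Next, I would secure existence of the degree-$n$ multiplier by arranging that $m(X) \cdot (1 + X + \cdots + X^n)$ has all non-leading coefficients $\le 0$: this is a finite linear constraint on the top $n$ coefficients of $R(X)$ that is straightforward to satisfy.

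The hard part will be ruling out all lower-degree multipliers simultaneously. Given a hypothetical $h'(X) = X^m + b_{m-1} X^{m-1} + \cdots + b_0 \in \zz[X]$ with $m < n$ for which $m(X) h'(X) = X^{d+m} - Q'(X)$ with $Q' \in \nn_0[X]$, the non-positivity of the top $m$ coefficients of the product, namely
\[ [X^{d+m-k}]\bigl(m(X) h'(X)\bigr) \le 0 \quad \text{for } k = 1, 2, \dots, m, \]
gives a triangular linear system in $b_{m-1}, b_{m-2}, \dots, b_0$ whose structure is dictated by the top $n$ coefficients of $R(X)$. The task is to choose those coefficients so that, for every $m < n$, this system (possibly combined with some of the lower-degree consistency checks) admits no integer solution. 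In the $n = 2$ case the two-inequality obstruction ($b_0 \ge 1$ versus $b_0 \le 0$) arising from the choices $-3p$ and $+2p$ at degrees $d-1$ and $d-2$ suffices. For general $n$, I expect the right choice to be a carefully alternating cascade that pins each $b_k$ into a half-line incompatible with the next one, with the closing contradiction supplied by the constant coefficient $-p$ of $m(X)$. A plausible route is induction on $n$: reuse the pattern for $n-1$ in the top $n-1$ positions and insert a single new coefficient at position $d-n$ calibrated to block precisely the newly available multiplier of degree $n-1$. Producing such a pattern explicitly (or else furnishing a positivity certificate that rules out all lower-degree multipliers at once) is the main technical obstacle.
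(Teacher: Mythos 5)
The statement you are trying to prove is stated in the paper as a \emph{conjecture}: the authors explicitly present it as an open problem motivated by their proof of Proposition~\ref{prop:infinitely many f.g. CARS that are not OHFMs}, and they give no proof of it. So there is no argument in the paper to compare yours against; the only question is whether your proposal actually settles the conjecture, and it does not. Your reduction is correct and worth keeping: by Theorem~\ref{thm:atomic characterization} one has $|\mathcal{A}(\nn_0[\alpha])| = \sigma(\alpha)$ when $\sigma(\alpha) < \infty$, by Gauss's lemma $\sigma(\alpha) = d + n$ is equivalent to the existence of a monic $h(X) \in \zz[X]$ of degree $n$ (and none of smaller degree, including degree $0$) with $m_\alpha(X)h(X) = X^{d+n} - Q(X)$ for some $Q(X) \in \nn_0[X]$, and the cases $n=1,2$ are exactly what Theorem~\ref{thm:OHFM characterization} and Proposition~\ref{prop:infinitely many f.g. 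CARS that are not OHFMs} achieve. But the heart of the matter --- exhibiting, for arbitrary $n$, a coefficient pattern that admits a nonnegative multiplier in degree $n$ while excluding one in every degree $m < n$ --- is precisely what you defer, and you say so yourself (``the main technical obstacle''). A plan plus an acknowledged obstacle is not a proof.

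Two further cautions about the plan as written. First, for each $m < n$ you must exclude \emph{all} monic integer polynomials $h'(X) = X^m + b_{m-1}X^{m-1} + \cdots + b_0$, an infinite family; the ``triangular system'' in the top $m$ coefficients only pins down sign ranges for the $b_i$ one at a time, and the contradiction must ultimately be closed using constraints from the lower-order coefficients of the product as well (in the paper's $n=2$ argument the incompatible pair of inequalities on $b_0$ comes from the degree-$(d-1)$ and degree-$(d-2)$ coefficients, and nothing guarantees a priori that such a clash propagates to all $m < n$ simultaneously for a single choice of $R(X)$). Second, your proposed inductive step (``insert one new coefficient at position $d-n$ to block the newly available degree-$(n-1)$ multiplier'') risks destroying the existence of the degree-$n$ multiplier or the exclusion of the smaller ones already arranged, since the conditions are coupled through the same coefficients of $R(X)$; you would need to verify non-interference at every stage. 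Until that construction is carried out and verified, the statement remains, as in the paper, a conjecture.
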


We conclude the paper with Figure~\ref{fig:atomic classes of CASs from FGMs}, which is a visual summary of the results established in this section. Observe that if we put together the diagram in Figure~\ref{fig:atomic classes of CASs from FGMs} and that in Figure~\ref{fig:atomic classes of CASs up to FGMs}, then we obtain the diagram in Figure~\ref{fig:atomic classes of CASs}, which is the summarizing diagram of the main results we have established in this paper.

\begin{figure}[h]
	\begin{tikzcd} 
		\big[ \textbf{UFM} \arrow[r, Leftrightarrow] & \textbf{HFM} \big] \arrow[r, Rightarrow] \arrow[red, r, Leftarrow, "/"{anchor=center,sloped}, shift left=1.5ex] & \textbf{LFM} \arrow[blue, r, Rightarrow] \arrow[red, r, Leftarrow, "/"{anchor=center,sloped}, shift left=1.5ex] &\textbf{FGM}
	\end{tikzcd}
	\caption{The non-obvious implications in the diagram represent the main results we have established in this section, which include counterexamples for the red marked arrows. The last implication (in blue) holds for all algebraic monoid valuations of $\nn_0[X]$ and the rest of the implications hold for all monoid valuations of $\nn_0[X]$.}
	\label{fig:atomic classes of CASs from FGMs}
\end{figure}
\smallskip

\bigskip
\section*{Acknowledgments}

The authors would like to thank an anonymous referee for many suggestions that helped to improve the present paper. During this collaboration, the second author was supported by the NSF postdoctoral award DMS-1903069.
\medskip

\bigskip

\end{document}